\renewcommand\section{\@startsection {section}{1}{\z@}
{-30pt \@plus -1ex \@minus -.2ex}
{2.3ex \@plus.2ex}
{\normalfont\normalsize\bfseries}}
\renewcommand\subsection{\@startsection{subsection}{2}{\z@}
{-3.25ex\@plus -1ex \@minus -.2ex}
{1.5ex \@plus .2ex}
{\normalfont\normalsize\bfseries}}
\renewcommand{\@seccntformat}[1]{\csname the#1\endcsname. }
\newtheorem{theorem}{Theorem}
\numberwithin{theorem}{section}
\newtheorem{proposition}[theorem]{Proposition}
\newtheorem{lemma}[theorem]{Lemma}
\newtheorem{corollary}[theorem]{Corollary}
\theoremstyle{definition}
\newtheorem{definition}[theorem]{Definition}
\newtheorem{remark}[theorem]{Remark}
\newtheorem{example}[theorem]{Example}
\begin{document}

\title{Offset Hypersurfaces and Persistent Homology of Algebraic Varieties}

\author{Emil Horobe\c{t} and Madeleine Weinstein}
\date{}
\maketitle

\begin{abstract}
In this paper, we study the persistent homology of the offset filtration of algebraic varieties. We prove the algebraicity of two quantities central to the computation of persistent homology. Moreover, we connect persistent homology and algebraic optimization.  Namely, we express the degree corresponding to the distance variable of the offset hypersurface in terms of the Euclidean Distance Degree of the starting variety, obtaining a new way to compute these degrees. Finally, we describe the non-properness locus of the offset construction and use this to describe the set of points that are topologically interesting (the medial axis and center points of the bounded components of the complement of the variety) and relevant to the computation of persistent homology.  
\end{abstract}

\section{Introduction}

Experimental research is based on collecting and analyzing data. It is very important to understand the background mathematical model that defines a given phenomenon. One of the possibilities is that the data is driven by a geometric model, say an algebraic variety or a manifold. In this case, we would like to ``learn the geometric object" from the data (for more details see \cite{BKSW18}). For example, we would like to understand the topological features of the underlying model. A common way to do this is by \textit{persistent homology} (\cite{Carlsson, NSW, CZ}), 
which studies the homology of the set of points within a range of distances from the data set, and considers features to be of interest if they persist through a wide range of the distance parameter.

This article is at the intersection of computational geometry, geometric design, topology and algebraic geometry, linking all of these topics together. In what follows, we study the \textit{persistent homology of the offset filtration} of an algebraic variety, which we define to be the homology of its offsets. Related work includes \cite{HKS15} in which the notion of persistent homology is extended to the offsets of convex objects. 

We show that the indicators (barcodes) of the persistent homology of the offset filtration of a variety defined over the rational numbers are algebraic and thus can be computed exactly (Theorem~\ref{algebraicity}). Moreover, we connect persistent homology and algebraic optimization (Euclidean Distance Degree~\cite{DHOST16}) through the theory of offsets, bringing insights from each field to the other.  Namely, we express the degree corresponding to the distance variable of the offset hypersurface in terms of the Euclidean Distance Degree of the original variety (Theorem~\ref{Eps_Degree}), obtaining a new way to compute these degrees. A consequence of this result is a bound on the degree of the \textit{ED discriminant} (Corollary~\ref{degree_discriminant}) and on the degree of the closure of the medial axis (see \ref{medial_axis}). We describe the non-properness locus of the offset construction (Subsection $2.1$) and use this to describe the set of points (Theorem~\ref{interesting_points}) in the ambient space that are topologically interesting (the medial axis and center points of the bounded components of the complement of the variety) and relevant to the computation of persistent homology. Lastly, we show that the reach of a manifold, the quantity used to ensure the correctness of persistent homology computations, is algebraic (Proposition \ref{reach}).   

The article is structured as follows. Section~\ref{Sec2} discusses offset hypersurfaces. We analyze the construction, dimension and degree of offsets and define the offset discriminant. Section~\ref{Sec3} is about persistent homology. We review background material on persistent homology, define the persistent homology of the offset filtration of an algebraic variety and prove its algebraicity, connect the offset discriminant to topologically interesting points in the complement of the variety, and prove the algebraicity of the reach.

\section{Offset hypersurfaces of algebraic varieties}\label{Sec2}

We devote this section to the algebraic study of offset hypersurfaces. Driven by real world applications, our starting variety $X_{\mathbb{R}}\subseteq \mathbb{R}^n$ is a real irreducible variety and we construct its $\epsilon$-offset hypersurface, for any generic real positive $\epsilon$. In order to use techniques from algebraic geometry, we consider the variety $X\subseteq \mathbb{C}^n$ that is the complexification of $X_{\mathbb{R}}$ and let $\epsilon$ be any complex number. In what follows, by the squared distance of two points $x,y\in \mathbb{C}^n$ we will mean the complex value of the function $d(x,y)=\sum_{i=1}^n(x_i-y_i)^2$. This is not the usual Hermitian distance function on $\mathbb{C}^n$, but rather the complexification of the real Euclidean distance function. It is not a metric on $\mathbb{C}^n$, but it is a metric when restricted to $\mathbb{R}^n$.

\subsection{Offset construction}

Let $X\subseteq \mathbb{C}^n$ be an irreducible variety of codimension $c$ and let $\epsilon$ be a fixed (generic) complex number. By an $\epsilon$-hyperball centered at a point $y\in\mathbb{C}^n$, we mean the variety $V(d(x,y)-\epsilon^2)$.

\begin{definition}
The \textbf{$\epsilon$-offset hypersurface} is defined to be the union of the centers of $\epsilon$-hyperballs that intersect the variety $X$ non-transversally at some point $x\in X$. Equivalently the $\epsilon$-offset hypersurface is the envelope of the family of $\epsilon$-hyperballs centered on the variety. For a fixed $\epsilon$ we denote the $\epsilon$-offset hypersurface by $\mathcal{O}_{\epsilon}(X)$.
\end{definition}

\begin{figure}[h]
\begin{center}
\vskip -0.2cm
\includegraphics[scale=1.1]{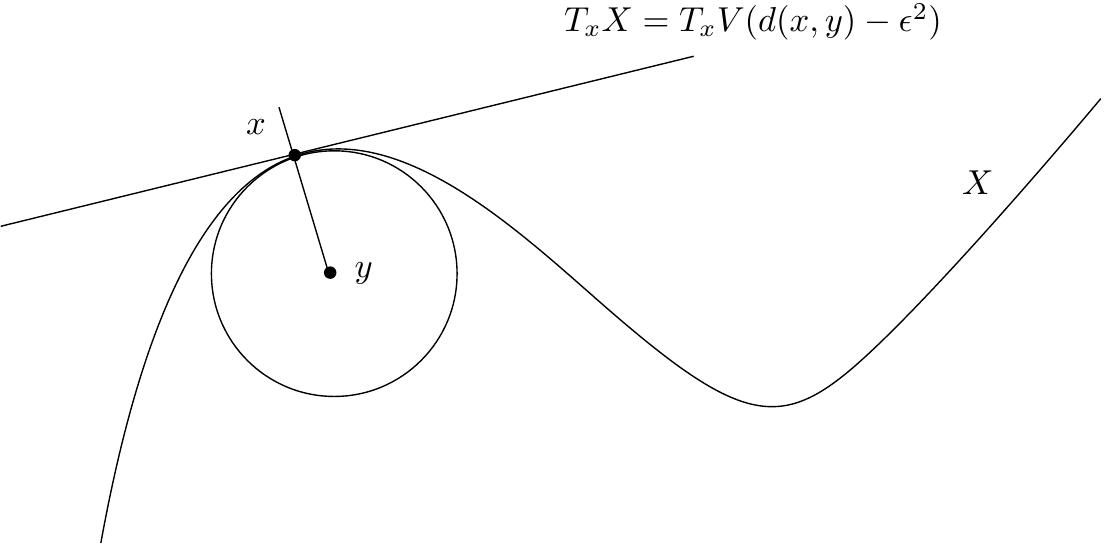}
\vskip -0.2cm
\caption{Non-transversal intersection of the variety with the $\epsilon$-hyperball.}
\label{Offset hypersurface construction}
\end{center}
\end{figure}

Let $y\in\mathcal{O}_{\epsilon}(X)$, the above-defined $\epsilon$-offset hypersurface. Then there exists an $x\in X_{reg}$, that is a regular (nonsingular) point of the variety, such that the squared distance $d(x,y)$ is exactly $\epsilon^2$ and by the non-transversality $T_x X\subseteq T_x V(d(x,y)-\epsilon^2)$. Hence
\[
x-y\perp T_x X,
\] where $T_x X$ is the tangent space at $x$ to $X$ and $T_x V(d(x,y)-\epsilon^2)$ is the tangent space at $x$ to $V(d(x,y)-\epsilon^2$), the variety defined  by the vanishing of the polynomial $d(x,y)-\epsilon^2$, which is the $\epsilon$-hyperball centered at $y$.

The latter condition can be described by polynomial equations as follows (see for example \cite[Section $2$]{DHOST16}). The condition $x-y \perp T_x X$ is satisfied if and only if the rank of

\[\left(
\begin{array}{cc}
x-y \\
\mathrm{Jac}_x (I)\\
\end{array}\right)
\] 
is less then $c+1$, where $\mathrm{Jac}_x (X)$ is the Jacobian of the defining radical ideal of the variety $X$, at the point $x$ (the matrix of all the partial derivatives of all the minimally defining polynomials of $X$). Namely $x-y \perp T_x X$ if and only if all the $(c+1)\times (c+1)$ minors of the matrix above vanish.

To capture the entire geometry behind the construction of the offset hypersurface we consider the closure of the set of all pairs $(x,y)\in \mathbb{C}^n\times \mathbb{C}^n$ such that $x\in X_{reg}$ and $y$ satisfies the conditions above. We name this variety the \textbf{offset correspondence} of $X$ and denote it by $\mathcal{OC}_{\epsilon}(X)$. This correspondence is a variety in $\mathbb{C}^n_x\times \mathbb{C}^n_y$ and is equal to the closure of the intersection
\[\label{offset_equ}
(X_{reg}\times \mathbb{C}^n)\cap V\left( (c+1)\times (c+1) \text{ minors of } \left(\begin{array}{cc}
x-y \\
\mathrm{Jac}_x (I) \\
\end{array}\right) \right)\cap V( d(x,y)-\epsilon^2).
\]

Observe that the intersection of the first two varieties is the \textit{Euclidean Distance Degree correspondence}, $\mathcal{E}(X)$, that is the closure of the pair of points $(x,y)$ in $\mathbb{C}^n_x\times \mathbb{C}^n_y$, such that $x\in X_{reg}$ and $x-y\perp T_x X$. This correspondence contains pairs of ``data points" $y\in \mathbb{C}^n_y$ and corresponding points on the variety $x\in X_{reg}$, such that $x$ is a constrained critical point of the Euclidean distance function $d_{y}(x)=d(x,y)$ with respect to the constraint that $x\in X_{reg}$. For more details on this problem we direct the reader to \cite[Section 2]{DHOST16}.
Using the terminology of the Euclidean Distance Degree problem, we have
\begin{equation}\label{ED_corr_def}
\mathcal{OC}_{\epsilon}(X)=\mathcal{E}(X)\cap V(d(x,y)-\epsilon^2).
\end{equation}

From the offset correspondence, we have the natural projections $\mathrm{pr}_x: \mathcal{OC}_{\epsilon}(X) \to  \mathbb{C}^n_x$ and $\mathrm{pr}_y: \mathcal{OC}_{\epsilon}(X) \to  \mathbb{C}^n_y.$ The closure of the first projection is the variety $X$ and the closure of the second projection is the offset hypersurface $\mathcal{O}_{\epsilon}(X)$.

It follows that the \textbf{offset hypersurface} is
\[
\mathcal{O}_{\epsilon}(X)=\overline{\mathrm{pr}_y(\mathcal{OC}_{\epsilon}(X))}\subseteq \mathbb{C}^n_y.
\]

\begin{remark}\label{same_field}
When $X$ is a real variety, note that by the Tarski-Seidenberg Theorem (see Lemma \ref{TST}) the offset hypersurface is defined over the same closed real (sub)field as $X$ and $\epsilon$ are defined.
\end{remark}

In the following example, we illustrate an algorithm to compute the defining polynomial of the offset hypersurface of an ellipse using {\tt Macaulay2} \cite{M2}.
\begin{example}[\textbf{Computing the offset hypersurface of the ellipse}]\label{ellipse_offset_exa}
Consider the ellipse $X\subseteq \mathbb{C}^{2}$ defined by the vanishing of the polynomial $f=x_1^2+4x_2^2-4$. The code below outputs the defining ideal of the offset hypersurface in terms of the parameter $\epsilon$. 
\begin{verbatim}
n=2;
kk=QQ[x_1..x_n,y_1..y_n,e];
f=x_1^2+4*x_2^2-4;
I=ideal(f);
c=codim I;
Y=matrix{{x_1..x_n}}-matrix{{y_1..y_n}};
Jac= jacobian gens I;
S=submatrix(Jac,{0..n-1},{0..numgens(I)-1});
Jbar=S|transpose(Y);
EX = I + minors(c+1,Jbar);
SingX=I+minors(c,Jac);
EXreg=saturate(EX,SingX);
distance=Y*transpose(Y)-e^2;
Offset_Correspondence=EXreg+ideal(distance);
Off_hypersurface=eliminate(Offset_Correspondence,toList(x_1..x_n))
\end{verbatim}
\newpage
The result is that $\mathcal{O}_{\epsilon}(X)$ is the zero locus of the polynomial 
\[
y_1^8+10y_1^6y_2^2+33y_1^4y_2^4+40y_1^2y_2^6+16y_2^8+4y_1^6\epsilon^2-30y_1^4y_2^2\epsilon^2-90y_1^2y_2^4\epsilon^2
\]
\[
-56y_2^6\epsilon^2-2y_1^4\epsilon^4+62y_1^2y_2^2\epsilon^4+73y_2^4\epsilon^4-12y_1^2\epsilon^6-42y_2^2\epsilon^6+9\epsilon^8-14y_1^6
\]
\[
-90y_1^4y_2^2-120y_1^2y_2^4+64y_2^6-62y_1^4\epsilon^2+140y_1^2y_2^2\epsilon^2-248y_2^4\epsilon^2-90y_1^2\epsilon^4
\]
\[
+270y_2^2\epsilon^4-90\epsilon^6+73y_1^4+248y_1^2y_2^2-32y_2^4+270y_1^2\epsilon^2-360y_2^2\epsilon^2
\]
\[
+297\epsilon^4-168y_1^2-192y_2^2-360\epsilon^2+144.
\]

The code above is designed to work in arbitrary dimensions and for any variety. For this reason, we saturate by the singular locus of the variety, even though this step is unnecessary in this example as the ellipse is smooth. 
\end{example}

\begin{example}[\textbf{Offset hypersurface of a space curve}]
Let the variety $X$ be the Viviani curve in $\mathbb{C}^3$, defined by the intersection of a sphere with a cylinder tangent to the sphere and passing through the center of the sphere. So $X$ is defined by the vanishing of $f_1=x_1^2+x_2^2+x_3^2-4$ and $f_2=(x_1-1)^2+x_2^2-1$. In Figure~\ref{Viviani} the reader can see (on the left) the real part of the Viviani curve and (on the right) the $\epsilon=1$ offset surface of the curve. This surface is defined by a degree $10$ irreducible polynomial consisting of $175$ monomials.
\begin{figure}[h]
\begin{center}
\vskip -0.2cm
\includegraphics[scale=0.3]{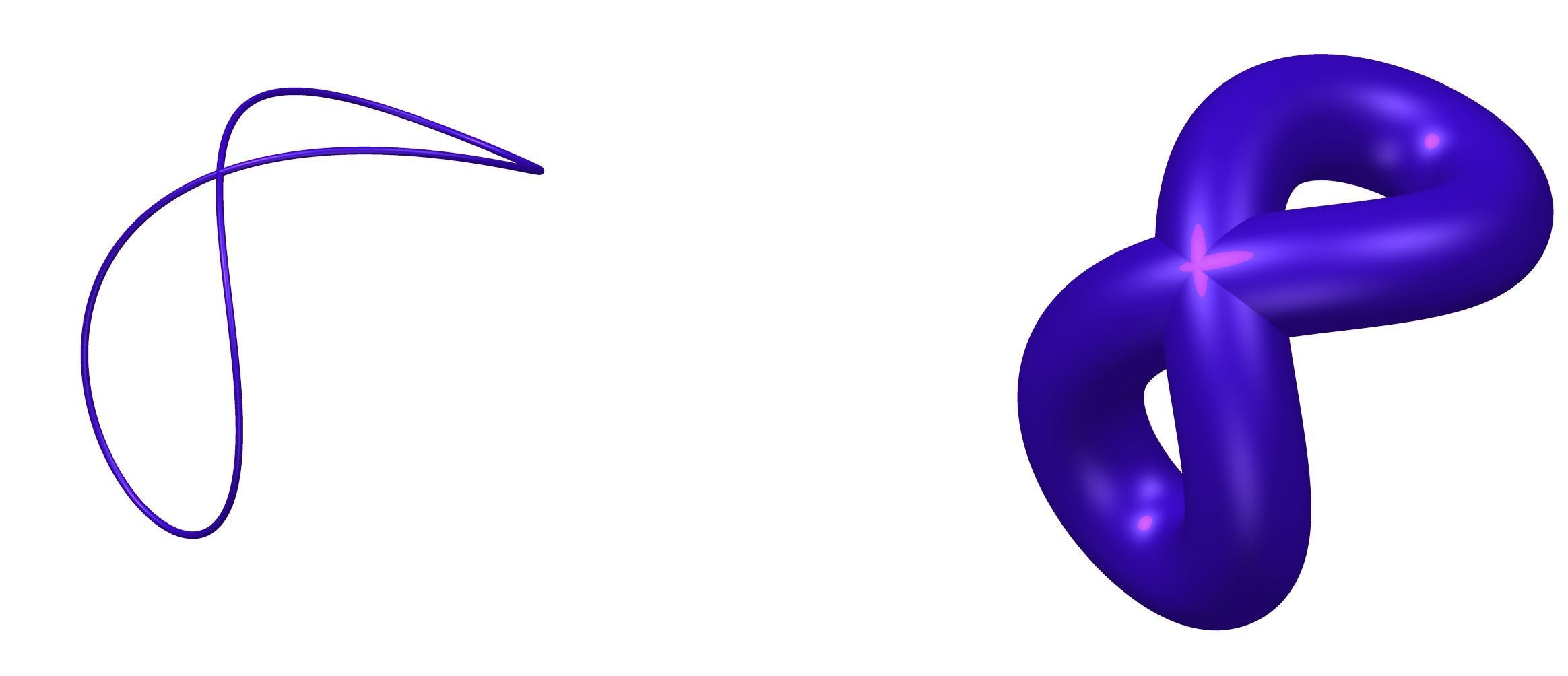}
\vskip -0.2cm
\caption{The Viviani curve (left) and its offset surface (right).}
\label{Viviani}
\end{center}
\end{figure}
\end{example} 
One could consider the family of all $\epsilon$-offset hypersurfaces $\mathcal{O}_{\epsilon}(X)\subseteq \mathbb{C}^n$ as $\epsilon$ varies over $\mathbb{C}$. This family is again a hypersurface in $\mathbb{C}^n_y\times \mathbb{C}_{\epsilon}^1$  defined by the same ideal as is $\mathcal{O}_{\epsilon}(X)$, but now $\epsilon$ is a variable.

Define the \textbf{offset family}, $\mathcal{O}(X)$, to be the closure of all offset hypersurfaces of $X$ in the $n+1$ dimensional space $\mathbb{C}^n_{y}\times \mathbb{C}^1_{\epsilon}$ defined by the same ideal as $\mathcal{O}_{\epsilon}(X)$. More precisely, let
\[
\mathcal{O}(X)=\overline{\{(y,\epsilon),\ y\in\mathcal{O}_{\epsilon}(X)\}}\subseteq \mathbb{C}^n_{y}\times \mathbb{C}^1_{\epsilon}.
\]

\begin{example}[\textbf{The offset family of an ellipse}]
A picture of the real part of $\mathcal{O}(X)$, where $X$ is the ellipse defined by the vanishing of $x_1^2+4x_2^2-4=0$, can be seen below in Figure~\ref{Offset_ellipse}. It is the set of all points $(y_1,y_2,\epsilon)\in \mathbb{C}^3$ that are zeros of the polynomial in \ref{ellipse_offset_exa}.

\begin{figure}[h]
\begin{center}
\vskip -0.2cm
\includegraphics[scale=0.2]{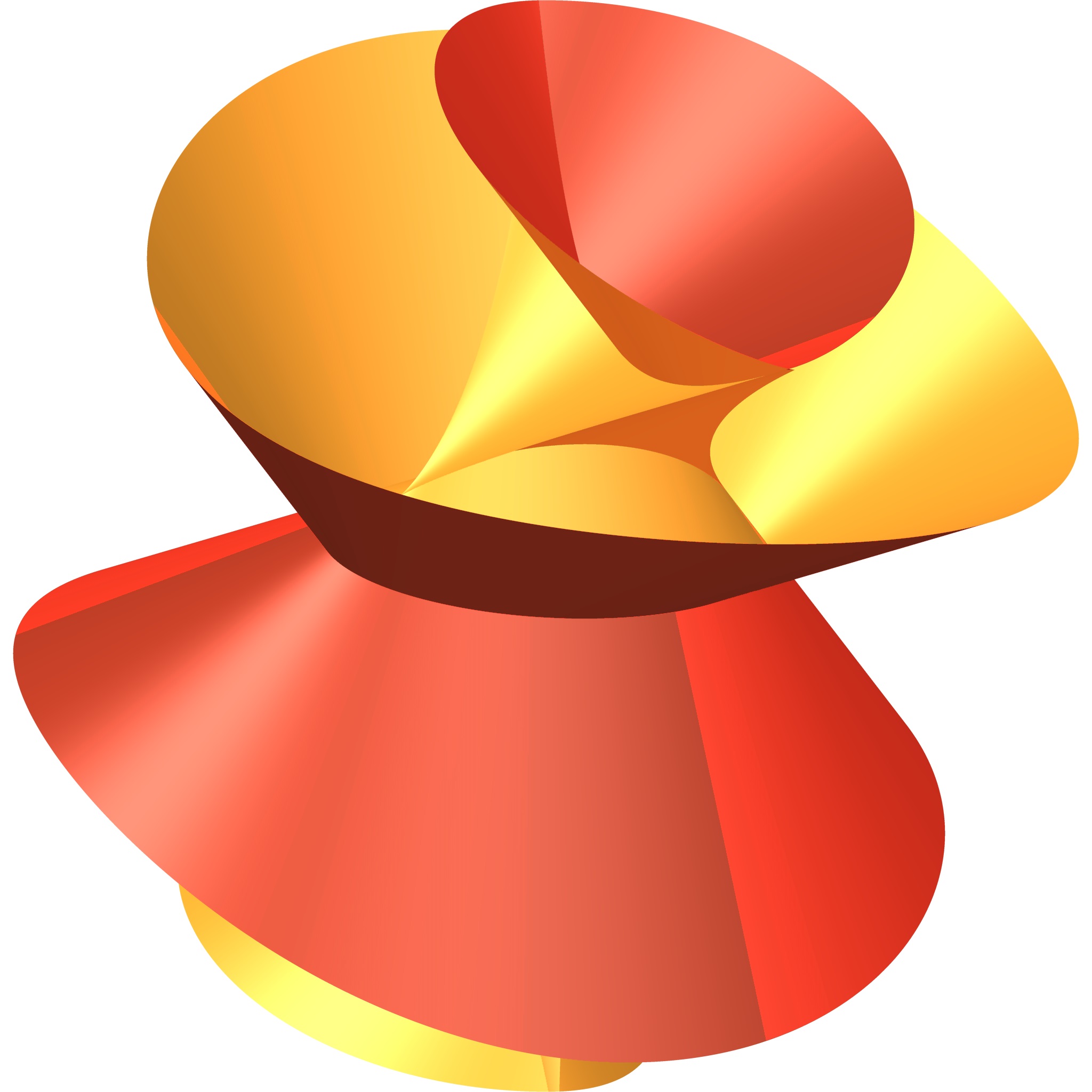}
\vskip -0.2cm
\caption{Offset family of an ellipse.}
\label{Offset_ellipse}
\end{center}
\end{figure}

A horizontal cut (by a plane $\epsilon=\epsilon_0$) of the surface above is the $\epsilon_0$-offset curve of the ellipse.
\end{example}

\subsection{Offset dimension and degree}

The degree and dimension are important invariants of an algebraic variety. These invariants of the offset hypersurface have been studied by many authors (for example by San Segundo and Sendra in \cite{SegundoSendra05,SegundoSendra09,SegundoSendra12}) in both the implicit and the parametric cases. To supplement the existing literature, in this subsection we relate the degree in $\epsilon$ (or $\epsilon$-degree) of the defining polynomial of generic offset hypersurfaces to the ED degree of the original variety, for any irreducible variety $X$. In this way, we achieve a new method for computing both ED degrees of varieties and degrees of offsets.

We now recall a theorem crucial in further understanding the essence of the offset construction.

\begin{theorem}[Theorem 4.1 from \cite{DHOST16}]

The Euclidean Distance Degree correspondence $\mathcal{E}(X)$ (see Equation \ref{ED_corr_def}) is an irreducible variety of dimension $n$ inside $\mathbb{C}_x^n \times \mathbb{C}^n_y$. 
The first projection $\mathrm{pr}_{x} : \mathcal{E}(X) \to X \subseteq \mathbb{C}_x^n$ is an affine vector bundle of rank $c$ over $X_{reg}$. 
Over generic $y_0 \in \mathbb{C}^n_y$, the second projection $\mathrm{pr}_y: \mathcal{E}(X) \to
\mathbb{C}^n_y$ has finite fibers $\mathrm{pr}_y^{-1}(y_0)$ of cardinality equal (by definition) to the \textit{Euclidean Distance Degree} (ED degree) of $X$.

\[\begin{tikzcd}
 &\mathcal{E}(X) \arrow{ld}[swap]{\mathrm{pr}_{x}}\arrow{rd}{\mathrm{pr}_y}& \\
 \mathbb{C}^n_x & & \mathbb{C}^n_y
\end{tikzcd}
\]
\end{theorem}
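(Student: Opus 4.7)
The plan is to understand $\mathcal{E}(X)$ through its two projections separately. First I would analyse $\mathrm{pr}_x$. For $x\in X_{reg}$, the defining conditions of $\mathcal{E}(X)$ force $\mathrm{pr}_x^{-1}(x) = \{x\}\times (x+N_x X)$, where $N_x X=(T_x X)^{\perp}$ is the normal space of dimension $c$ (with respect to the complex symmetric form $d$). To upgrade this fiberwise picture to an affine bundle structure, I would pick $x_0\in X_{reg}$ and restrict to a Zariski open $U\subseteq X_{reg}$ on which one can choose $c$ polynomials $f_1,\dots,f_c$ from the ideal of $X$ whose gradients are linearly independent throughout $U$; such a $U$ exists because the rank of $\mathrm{Jac}_x(I)$ equals $c$ on the smooth locus. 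The assignment
\[
U\times\mathbb{C}^c \longrightarrow \mathrm{pr}_x^{-1}(U),\qquad (x,\lambda)\mapsto \Bigl(x,\, x+\sum_{i=1}^c \lambda_i\,\nabla f_i(x)\Bigr),
\]
is then a biregular trivialization, exhibiting $\mathrm{pr}_x$ as an affine vector bundle of rank $c$. Irreducibility and dimension of $\mathcal{E}(X)$ follow at once: $X_{reg}$ is irreducible of dimension $n-c$, the fibers of the bundle are irreducible of dimension $c$, so $\mathrm{pr}_x^{-1}(X_{reg})$ is irreducible of dimension $n$, and Zariski closure preserves both properties.

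For $\mathrm{pr}_y$, both source and target have dimension $n$, so by the fiber dimension theorem it suffices to show $\mathrm{pr}_y$ is dominant; the generic fiber will then automatically be finite, and its cardinality is by definition the ED degree of $X$. To establish dominance, I would exhibit at least one point in the image for a Zariski dense set of $y$'s: for any $y\in\mathbb{R}^n$ sufficiently far from $X_{\mathbb{R}}$, the Euclidean distance function $d_y$ attains its minimum on a smooth point $x^{\ast}\in X_{\mathbb{R}}\cap X_{reg}$, and by the Lagrange multiplier condition $(x^{\ast},y)\in\mathcal{E}(X)$. This produces pre-images for a full-dimensional real set of $y$'s, forcing the constructible image of $\mathrm{pr}_y$ to be Zariski dense.

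The main obstacle is this last step. The irreducibility, the dimension count, and the bundle description of $\mathrm{pr}_x$ all follow essentially formally from the explicit trivialization above together with smoothness of $X_{reg}$. Dominance of $\mathrm{pr}_y$, on the other hand, is the one place where one must step outside the formal algebraic picture, either by appealing to the real topology as sketched or by a careful Bezout-style count of critical points of $d_y$ on $X$ for generic $y$. Once dominance is in hand, the final statement about the cardinality of the generic fiber is immediate from the very definition of the Euclidean Distance Degree.
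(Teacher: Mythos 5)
This theorem is quoted verbatim from~\cite{DHOST16} and invoked as a black box; the paper offers no proof of its own, so there is nothing in-paper to compare against directly. Judged on its own merits, your treatment of the first projection is correct and is the standard route: the fiber over a smooth $x$ is $\{x\}\times(x+N_xX)$, your gradient-based local trivialization exhibits the affine bundle structure of rank $c$, and irreducibility together with $\dim\mathcal{E}(X)=n$ follow because $X_{reg}$ is irreducible of dimension $n-c$ and Zariski closure preserves both.

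Your argument for the second projection, however, has a genuine gap at the level of generality the theorem claims. You deduce dominance from the existence of a real minimizer of $d_y$ on $X_{\mathbb{R}}$, which presupposes $X_{\mathbb{R}}\neq\emptyset$ and Zariski dense in $X$. That holds in this paper's restricted setting (complexifications of real varieties), but Theorem 4.1 of~\cite{DHOST16} is stated for an arbitrary irreducible $X\subseteq\mathbb{C}^n$, and there dominance can fail outright: for the isotropic line $X=V(x_2-ix_1)\subset\mathbb{C}^2$ one has $N_xX=T_xX$ at every point, so $\mathrm{pr}_y(\mathcal{E}(X))=X$ is one-dimensional and the ED degree is $0$. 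Two smaller points: the phrase ``sufficiently far from $X_{\mathbb{R}}$'' does no work (what you actually need, and do not justify, is that the unique nearest point for a generic real $y$ lies in $X_{reg}$ rather than $X_{sing}$); and, more importantly, dominance is not needed for the conclusion as stated. Since $\mathcal{E}(X)$ is irreducible of dimension $n=\dim\mathbb{C}^n_y$, either $\mathrm{pr}_y$ is dominant and the generic fiber is finite by upper semicontinuity of fiber dimension, or the image is a proper subvariety and the fiber over a generic point of $\mathbb{C}^n_y$ is empty, hence also finite. Either way the finiteness claim follows from the dimension count alone, and the cardinality statement is then, as the theorem itself says, a definition.
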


\begin{remark}\label{def_Ed_disc}
	The second projection, $\mathrm{pr}_y$, has a ramification locus which is generically a hypersurface in $\mathbb{C}^n_y$, by the Nagata-Zariski Purity Theorem \cite{Purity2},\cite{Purity1}. The \textit{Euclidean Distance discriminant (ED discriminant)} is the closure of the image of the ramification locus of $\mathrm{pr}_y$, (i.e. the points where the derivative of $\mathrm{pr}_y$ is not of full rank, under the projection $\mathrm{pr}_y$). As in~\cite[Section 7]{DHOST16}, we denote the ED discriminant of the variety $X$ by $\Sigma(X)$.
\end{remark}

The offset correspondence is the intersection of the Euclidean Distance Degree correspondence with the hypersurface $V(d(x,y)-\epsilon^2)$ in $\mathbb{C}^n_x\times \mathbb{C}^n_y$ (recall Equation \ref{ED_corr_def}). This intersection is $n-1$ dimensional because $\mathcal{E}(X)$ is not a  subvariety of $V(d(x,y)-\epsilon^2)$ (because not all pairs $(x,y)\in \mathcal{E}(X)$ are at $\epsilon^2$ squared distance from each other). 
As a consequence the offset correspondence, $\mathcal{OC}_{\epsilon}(X)$, is an $n-1$ dimensional variety in $\mathbb{C}^n_x\times \mathbb{C}^n_y$. But over generic $y_0 \in \mathbb{C}^n_y$, the projection \[\mathrm{pr}_y:\mathcal{OC}_{\epsilon}(X) \to \mathbb{C}^n_y\] has finite fibers, so the closure of the image, $\overline{\mathrm{pr}_y(\mathcal{OC}_{\epsilon}(X))}=\mathcal{O}_{\epsilon}(X)$ is $n-1$ dimensional as well, hence the name offset \textit{hypersurface}. For a more detailed analysis of the dimension degeneration of components of the offset hypersurface see~\cite{SendraSendra00}.

\begin{remark}\label{ED_many_times}
Observe that a fixed generic $y_0$ is an element of the offset hypersurface $\mathcal{O}_{\epsilon}(X)$ for precisely two times ED degree many distinct $\epsilon$.  This is because $y_0$ has ED degree many critical points to $X$, say $\{x_1,\ldots,x_{EDdegree(X)}\}$ and then the corresponding offset hypersurfaces that include $y_0$, are the ones where $\epsilon$ is in 
\[\left\{\pm \sqrt{d(x_1,y_0)},\ldots, \pm \sqrt{d(x_{EDdegree(X)},y_0)}\right\}.\]
\end{remark}

\begin{theorem}\label{Eps_Degree}
The degree in $\epsilon$ (or $\epsilon$-degree) of the defining polynomial of $\mathcal{O}(X)$ (the offset family) is equal to two times the Euclidean Distance degree of the variety $X$.
\end{theorem}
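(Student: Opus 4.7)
The plan is to identify the $\epsilon$-degree of the defining polynomial $F(y,\epsilon)$ of $\mathcal{O}(X)$ with the size of a generic fiber of the projection $\mathrm{pr}_y : \mathcal{O}(X) \to \mathbb{C}^n_y$, and then to count this fiber using Remark~\ref{ED_many_times}.

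In more detail, let $F(y,\epsilon) \in \mathbb{C}[y,\epsilon]$ be the squarefree generator of the radical ideal of $\mathcal{O}(X)$ and set $d_\epsilon := \deg_\epsilon F$. I would first show that for $y_0$ in a dense Zariski open subset of $\mathbb{C}^n_y$, the univariate polynomial $F(y_0,\epsilon)$ has exactly $d_\epsilon$ distinct roots. Every irreducible factor of $F$ with positive $\epsilon$-degree is primitive as an element of $\mathbb{C}[y][\epsilon]$ and hence remains irreducible in $\mathbb{C}(y)[\epsilon]$ by Gauss's lemma, so the product of these factors is squarefree in $\mathbb{C}(y)[\epsilon]$. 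Consequently its $\epsilon$-discriminant is a nonzero element of $\mathbb{C}[y]$; for $y_0$ avoiding the vanishing locus of this discriminant and that of the leading $\epsilon$-coefficient of $F$, the specialization $F(y_0,\epsilon)$ has $d_\epsilon$ distinct roots.

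Next I would invoke Remark~\ref{ED_many_times}: the roots of $F(y_0,\epsilon)$ are precisely those $\epsilon$ with $y_0 \in \mathcal{O}_\epsilon(X)$, and the remark identifies this set as $\{\pm\sqrt{d(x_i,y_0)} : i = 1, \ldots, \mathrm{EDdegree}(X)\}$, where $x_1, \ldots, x_{\mathrm{EDdegree}(X)}$ are the critical points of the squared distance function from $y_0$ to $X$. For generic $y_0$ these $2\cdot\mathrm{EDdegree}(X)$ values are distinct, since each degeneracy $d(x_i,y_0) = d(x_j,y_0)$ or $d(x_i,y_0) = 0$ cuts out a proper subvariety of $\mathbb{C}^n_y$. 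Matching the two counts gives $d_\epsilon = 2\cdot\mathrm{EDdegree}(X)$.

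I expect the main technical step to be the squarefreeness verification and, relatedly, handling possible $\epsilon$-free irreducible factors of $F$; these latter would correspond to components of $\mathcal{O}(X)$ of the form $V(g(y))\times\mathbb{C}_\epsilon$, which contribute $0$ to $d_\epsilon$ and to the generic fiber count alike, so they do not affect the equality. Once these technicalities are dispatched, the counting argument is essentially just a direct application of Remark~\ref{ED_many_times}.
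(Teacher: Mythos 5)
Your proof takes the same approach as the paper: apply Remark~\ref{ED_many_times} to count, for generic $y_0$, the number of distinct $\epsilon$ with $y_0\in\mathcal{O}_\epsilon(X)$, and identify this count with the number of distinct roots of $F(y_0,\epsilon)$, hence with $\deg_\epsilon F$. Your write-up is in fact more careful than the paper's terse version—you justify via Gauss's lemma and nonvanishing of the $\epsilon$-discriminant why the generic specialization $F(y_0,\epsilon)$ has exactly $\deg_\epsilon F$ distinct roots, and you explicitly rule out interference from $\epsilon$-free factors, both of which the published proof leaves implicit.
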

\begin{proof}
Suppose that $\mathcal{O}(X)$ is defined by $f(y,\epsilon)$. By Remark~\ref{ED_many_times}, a generic $y_0$ is an element of $\mathcal{O}_{\epsilon}(X)$ for precisely two times $ED$ degree many $\epsilon$. This is equivalent to $f(y_0,\epsilon)$ having exactly two times ED degree many roots. And these roots are
\[\left\{\pm \sqrt{d(x_1,y_0)},\ldots, \pm \sqrt{d(x_{EDdegree(X)},y_0)}\right\},\]
where $x_i$ are critical points of the distance from $y_0$ to the variety.
\end{proof}

We note that San Segundo and Sendra~\cite{SegundoSendra09} derived the $\epsilon$-degree of plane offset curves in terms of resultants. In the light of Theorem~\ref{Eps_Degree} their result says the following.

\begin{proposition}[Theorem $35$ from~\cite{SegundoSendra09}]
Let $X$ be a plane curve defined by the polynomial $f(x_1,x_2)$ of degree $d$. The ED degree of $X$ equals
\[
\mathrm{deg}_{x_1,x_2}\left(PP_{y_1,y_2}(\mathrm{Res}_{x_3}(F(x_H),N(x_H,y)))\right),
\]
where $x_H=(x_1,x_2,x_3)$, $y=(y_1,y_2)$, $F(x_H)$ is the homogenization of $f$ with respect to a new variable $x_3$, $N(x_H,y)=-F_2(x_H)(y_1x_3-x_1)+F_1(x_H)(y_2x_3-y_1)$, where $F_1$ and $F_2$ are the homogenized partial derivatives of $f$ and $PP_{y_1,y_2}$ denotes the primitive part of the given polynomial with respect to $\{y_1,y_2\}$. 
\end{proposition}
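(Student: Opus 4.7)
The plan is to combine Theorem~\ref{Eps_Degree} (which equates the $\epsilon$-degree of the offset family to $2\,\mathrm{EDdegree}(X)$) with an interpretation of the resultant quantity as counting the affine critical points of $d_y$ on $X$ for generic $y$. The key observation is that $N(x_H,y)$ is nothing but a projectivized form of the orthogonality condition underlying the ED-degree: dehomogenizing by setting $x_3=1$, the equation $N=0$ becomes $f_{x_1}(x)(y_2-x_2) - f_{x_2}(x)(y_1-x_1) = 0$, which is precisely the statement that $y-x$ is parallel to $\nabla f(x)$, i.e., $y-x \perp T_x X$. Hence the affine common zeros of $F$ and $N$ are exactly the critical points of the squared-distance function $d_y$ on $X_{reg}$, and their generic cardinality is by definition $\mathrm{EDdegree}(X)$.

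Next I would analyze $R(x_1,x_2,y_1,y_2) := \mathrm{Res}_{x_3}(F(x_H),N(x_H,y))$ viewed as a polynomial in $\CC[x_1,x_2,y_1,y_2]$ obtained by eliminating the homogenizing variable. By standard elimination theory, the projection $(x_1,x_2,x_3)\mapsto(x_1,x_2)$ of the projective intersection $V(F)\cap V(N)\subset \mathbb{P}^2$ lies in $V(R)$, but $R$ picks up extraneous factors coming from common zeros of $F$ and $N$ on the line at infinity $x_3=0$; since $F(x_1,x_2,0)$ is independent of $y$, these extraneous factors depend only on $y_1,y_2$. The primitive-part operation $PP_{y_1,y_2}$ is precisely designed to remove such $y$-only factors, leaving a polynomial in $(x_1,x_2,y_1,y_2)$ whose zero locus parametrizes only the genuine affine critical points.

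Counting with Bezout in the $(x_1,x_2)$-plane then yields that, for generic $y$, the number of roots of this primitive part (equivalently, its degree in $x_1,x_2$) equals the number of affine critical points of $d_y$ on $X$, which is $\mathrm{EDdegree}(X)$. This identification is the substance of Theorem 35 of \cite{SegundoSendra09}, originally phrased in terms of the $\epsilon$-degree of the plane offset curve; the translation to $\mathrm{EDdegree}(X)$ is exactly Theorem~\ref{Eps_Degree}. The main obstacle is the bookkeeping of extraneous components: verifying that all spurious factors produced by the resultant (both from points at infinity and from vanishing of leading coefficients in $x_3$) factor through $\CC[y_1,y_2]$ and are therefore annihilated by $PP_{y_1,y_2}$, so that no genuine critical point is lost and no spurious point is counted.
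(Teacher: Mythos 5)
The paper does not prove this proposition at all: it is presented as a direct citation of Theorem~35 from \cite{SegundoSendra09}, with the phrase ``In the light of Theorem~\ref{Eps_Degree} their result says the following'' doing the only translational work (San Segundo and Sendra's original statement concerns the $\epsilon$-degree of the plane offset curve, and Theorem~\ref{Eps_Degree} converts $\epsilon$-degree to ED degree). Your final paragraph correctly identifies this as the content of the proposition, so at the level of ``what proof does the paper offer,'' you match it.

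However, your supplementary geometric sketch, which attempts to re-derive San Segundo--Sendra's formula, has a genuine gap in the handling of extraneous factors. You claim that common zeros of $F$ and $N$ on the line at infinity $x_3=0$ produce factors of the resultant ``depending only on $y_1,y_2$,'' which $PP_{y_1,y_2}$ would then remove. This is backwards: dehomogenizing at $x_3=0$ gives $N(x_1,x_2,0,y) = x_1 F_2(x_1,x_2,0) - x_2 F_1(x_1,x_2,0)$, which is \emph{independent of $y$}. Hence intersections of $V(F)$ and $V(N)$ at infinity contribute factors in $x_1,x_2$ only, which $PP_{y_1,y_2}$ does \emph{not} strip out. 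The $y$-only factors removed by $PP_{y_1,y_2}$ actually arise from a different source: the leading coefficient of $N$ in $x_3$ (namely $-f_{x_2}(0,0)\,y_1 + f_{x_1}(0,0)\,y_2$) and similar degenerations. So the clean-up step does not do what you assert. There is also an unresolved tension in your write-up: your direct count of affine critical points would give $\mathrm{EDdegree}(X)$ without invoking Theorem~\ref{Eps_Degree} at all, whereas citing San Segundo--Sendra's $\epsilon$-degree result and applying Theorem~\ref{Eps_Degree} would first produce $2\cdot\mathrm{EDdegree}(X)$; you need to be explicit about which route you are taking and, if the latter, where the factor of $2$ is absorbed. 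Since the paper itself offers no proof, the safest course is to present the proposition as it does — a reformulation of the cited theorem — and either drop the geometric sketch or treat the resultant bookkeeping (spurious factors at infinity, homogeneity of $\mathrm{Res}_{x_3}$ in $(x_1,x_2)$, and the multiplicity of the $[x_1:x_2]$-projection) with full care.
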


\begin{example}[\textbf{Determinantal varieties}]
Suppose $n \leq m$ and let $M_{n,m}^{\leq r}$ be the variety of $n\times m$ matrices over $\mathbb{C}$ of rank at most $r$. This variety is defined by the vanishing of all $(r+1)\times (r+1)$ minors of the matrix. For a fixed $\epsilon$ the construction of the offset hypersurface reduces to determining the set of matrices that have at least one critical rank $r$ approximation at squared distance $\epsilon^2$. By~\cite[Example 2.3]{DHOST16} all the critical rank $r$ approximations to a matrix $U$ are of the form \[T_1\cdot \mathrm{Diag}(0,0,...,\sigma_{i_1},...,\sigma_{i_r},0,...,0)\cdot T_2,\] where the singular value decomposition of $U$ is equal to $U=T_1\cdot\mathrm{Diag}(\sigma_1,...,\sigma_n)\cdot T_2$, with $\sigma_1>...>\sigma_n$ singular values and $T_1,T_2$ orthogonal matrices of size $n\times n$ and $m\times m$.  Now by~\cite[Corollary 2.3]{HelmkeS92} the squared distance of such a critical approximation from $U$ is exactly
\[
\sigma_{i_1}^2+\ldots+\sigma_{i_r}^2.
\]
Recall that $\sigma_i^2$ are the eigenvalues of $U\cdot U^T$, so what we seek is that the sum of an $r$-tuple of the eigenvalues of $U\cdot U^T$ equals $\epsilon^2$. Let us denote by $\bigwedge^{(r)}( U\cdot U^T)$ the $r$-th \textbf{additive compound matrix} of $U\cdot U^T$. For the construction of this object we refer to~\cite[P14]{Fiedler74}. The additive compound matrix is an ${n \choose r}\times {n \choose r}$ matrix with the property that its eigenvalues are the sums of $r$-tuples of eigenvalues of the original matrix~\cite[Theorem 2.1]{Fiedler74}. So the eigenvalues of $\bigwedge^{(r)}( U\cdot U^T)$ are exactly $\sigma_{i_1}^2+\ldots+\sigma_{i_r}^2$. Putting this together we get that the offset hypersurface of $M_{n\times m}^{\leq r}$ is defined by the vanishing of
\[
\det\left(\bigwedge^{(r)}( U\cdot U^T)-\epsilon^2\cdot I_{{n \choose r}}\right).
\]
Observe that the $\epsilon$ degree of this polynomial is $2\cdot {n \choose r}$, which is indeed two times the ED degree of $M_{n\times m}^{\leq r}$ (see \cite[Example 2.3]{DHOST16}).
\end{example}

\subsection{Offset discriminant}
We now consider the restriction
$\mathrm{pr}_y|_{\mathcal{OC}_{\epsilon}(X)}: \mathcal{OC}_{\epsilon}(X)\ \to\ \mathcal{O}_{\epsilon}(X).$ We claim that, for generic $\epsilon$, this restriction is one-to-one outside its branch locus. Indeed if we fix a generic $y_0\in\mathcal{O}_{\epsilon}(X)$, then the fiber above $y_0$ equals

\[\mathrm{pr}^{-1}_y(y_0)=\left(V(d(x,y)-\epsilon^2)\cap (\mathbb{C}\times\{y_0\})\right)\cap \left(\mathcal{E}(X)\cap (\mathbb{C}\times\{y_0\})\right).\]

By the definition of ED degree we have that 
\[
\mathcal{E}(X)\cap (\mathbb{C}\times\{y_0\})=\{(x_1,y_0),\ldots,(x_{EDdegree(X)},y_0)\}.
\]

Combining these we get, 
\[
\mathrm{pr}^{-1}_y(y_0)=\{(x_1,y_0),\ldots,(x_{EDdegree(X)},y_0)\}\cap \left(V(d(x,y)-\epsilon^2)\cap (\mathbb{C}\times\{y_0\})\right).
\]
This means that the fiber consists of pairs $(x,y_0)$, such that $x$ is a critical point of the squared distance function from $y_0$ and is of squared distance $\epsilon^2$ from $y_0$. For generic $y_0\in \mathcal{O}_{\epsilon}(X)$ and generic $\epsilon$, there is exactly one such critical point. Otherwise a $y_0$ with at least two elements in the fiber would be a doubly covered point of the offset hypersurface, hence part of its singular locus, which is of strictly lower dimension than the offset hypersurface itself. Indeed the branch locus of the restriction of $\mathrm{pr}_y$ is (generically) a hypersurface inside $\mathcal{O}_{\epsilon}(X)$ (by the Nagata-Zariski Purity theorem \cite{Purity2,Purity1}), hence a codimension two variety in $\mathbb{C}_y^n$, and it consists of points $y$ for which there exist at least two $x_1,x_2\in X_{reg}$, such that $(x_1,y),(x_2,y)\in \mathcal{OC}_{\epsilon}(X)$, or one $(x_1,y)\in \mathcal{OC}_{\epsilon}(X)$ with 
multiplicity greater than one. We denote the closure of the union of all branch loci, over $\epsilon$ in $\mathbb{C}$, by $B(X,X)$, which is the \textbf{bisector hypersurface of the variety $X$} (see for instance \cite{Bisector1,Bisector2}). Note that the variety itself is a component of $B(X,X)$ because for $\epsilon=0$ the variety is covered doubly under the projection $\mathrm{pr}_y$. We call the set of doubly covered points such that $(x_1,y)\neq(x_2,y)\in\mathcal{OC}_{\epsilon}(x)$, the \textbf{proper bisector locus}, and we denote it by $B_0(X,X)$\label{Prop_bisector_def}. 
In summary, we have the following result.
\begin{proposition}\label{one_to_one}
For a fixed generic $\epsilon$ the projection $
\mathrm{pr}_y|_{\mathcal{OC}_{\epsilon}(X)}: \mathcal{OC}_{\epsilon}(X)\ \to\ \mathcal{O}_{\epsilon}(X)$
is one-to-one outside the bisector hypersurface $B(X,X)$.
\end{proposition}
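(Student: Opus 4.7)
The plan is to analyze the fiber of $\mathrm{pr}_y|_{\mathcal{OC}_{\epsilon}(X)}$ over a generic point $y_0 \in \mathcal{O}_{\epsilon}(X)$ and show that it is a singleton precisely when $y_0$ lies outside the bisector locus. The key observation is that the offset correspondence is cut out from $\mathcal{E}(X)$ by the single equation $d(x,y)-\epsilon^2=0$, so the fiber factors as an intersection
\[
\mathrm{pr}^{-1}_y(y_0)=\bigl(\mathcal{E}(X)\cap(\mathbb{C}^n\times\{y_0\})\bigr)\cap V(d(x,y_0)-\epsilon^2).
\]
By the theorem of \cite{DHOST16} stated above, the first factor consists of the $\mathrm{EDdegree}(X)$ critical points $x_1,\ldots,x_{EDdegree(X)}$ of $d_{y_0}$ on $X_{reg}$, so the fiber records exactly those critical points lying at squared distance $\epsilon^2$ from $y_0$.

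First I would argue that for generic $y_0 \in \mathcal{O}_{\epsilon}(X)$ and generic $\epsilon$, at most one of the values $d(x_i,y_0)$ can equal $\epsilon^2$. Suppose to the contrary that there is a positive-dimensional family of points $y_0 \in \mathcal{O}_{\epsilon}(X)$ over which two distinct critical points $x_1,x_2$ both satisfy $d(x_1,y_0)=d(x_2,y_0)=\epsilon^2$. Such $y_0$ would be doubly covered by $\mathrm{pr}_y|_{\mathcal{OC}_{\epsilon}(X)}$, and hence would lie in the singular locus of $\mathcal{O}_{\epsilon}(X)$; but this singular locus has strictly smaller dimension than the hypersurface, giving a contradiction with genericity. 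A similar argument covers the case of a single $(x_1,y_0)$ appearing with multiplicity greater than one.

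Next I would identify the exceptional set precisely. The branch locus of $\mathrm{pr}_y|_{\mathcal{OC}_{\epsilon}(X)}$ is, by the Nagata--Zariski Purity Theorem cited in Remark \ref{def_Ed_disc}, a hypersurface inside $\mathcal{O}_{\epsilon}(X)$, hence codimension two in $\mathbb{C}^n_y$. By the fiber analysis above, this branch locus consists exactly of points $y$ admitting either two distinct $x_1,x_2\in X_{reg}$ with $(x_1,y),(x_2,y)\in\mathcal{OC}_{\epsilon}(X)$, or a single $(x_1,y)\in\mathcal{OC}_{\epsilon}(X)$ of higher multiplicity. Taking the closure of the union of these branch loci as $\epsilon$ varies over $\mathbb{C}$ yields, by definition, the bisector hypersurface $B(X,X)$.

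The main obstacle I anticipate is the bookkeeping around the multiplicity case, since one must ensure that the restricted projection is not merely set-theoretically but scheme-theoretically one-to-one off $B(X,X)$; this reduces to checking that the differential of $\mathrm{pr}_y$ is an isomorphism at a generic point of the fiber, which follows because for generic $\epsilon$ the ED discriminant $\Sigma(X)$ is a proper subvariety and the points where the differential fails to be injective project into it. Everything else reduces to combining the displayed fiber decomposition with the ED degree theorem and the definition of $B(X,X)$.
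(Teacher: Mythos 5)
Your argument is essentially the paper's own: you identify the fiber over generic $y_0$ as the set of critical points of $d_{y_0}$ lying at squared distance $\epsilon^2$, argue genericity forces a single such point (else $y_0$ would be a doubly covered, hence singular, point of the hypersurface), invoke Nagata--Zariski purity to see the branch locus is a hypersurface inside $\mathcal{O}_{\epsilon}(X)$, and recover $B(X,X)$ as the closure of the union of branch loci over $\epsilon$. The closing paragraph on scheme-theoretic injectivity and $\Sigma(X)$ is an extra refinement beyond what the paper records, but the core of the proof is the same.
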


Let us see how this relates to (not the union but) the collection of offset hypersurfaces for all $\epsilon$. This collection is the offset family, $\mathcal{O}(X)$, and it is a hypersurface in $\mathbb{C}^n_{y}\times \mathbb{C}^1_{\epsilon}$. Its defining polynomial is the same as of $\mathcal{O}_{\epsilon}(X)$. Let us denote this polynomial by $f(y,\epsilon)$. Now if we consider $f(y,\epsilon)$ to be a univariate polynomial in the variable $\epsilon$, then we can compute its discriminant $\mathrm{Discr}_{\epsilon}(f)$, which is a polynomial in the variables  $y$, with the property that $f(y_0,\epsilon)$ has a double root (in $\epsilon$) if and only if $y_0$ is in the zero set of  $\mathrm{Discr}_{\epsilon}(f)$.

Now $y_0\in \mathrm{Discr}_{\epsilon}(f)$ if and only if there are fewer than two times ED degree many distinct roots, not counting multiplicities, of $f(y_0,\epsilon)$. By Theorem~\ref{Eps_Degree} this means that either $y_0$ has a non-generic number of critical points, meaning that $y_0$ is an element of the ED discriminant (for definition recall \ref{def_Ed_disc}), or there are two critical points $x_i\neq x_j$, such that
\[
d(x_i,y_0)=d(x_j,y_0),
\]
meaning that the projection $\mathrm{pr}_y: \mathcal{OC}_{\epsilon}(X)\to\mathcal{O}_{\epsilon}(X)$ is not one-to-one over $y_0$, so $y_0$ in an element of the branch locus. To summarize this we have the following proposition.

\begin{proposition}
Suppose that $\mathcal{O}(X)$ is defined by the vanishing of $f(y,\epsilon)$. Then the zero locus of the $\epsilon$-discriminant of $f$ is the union of the ED discriminant of $X$ and the bisector hypersurface of $X$. So we have that 
\[
\mathrm{Discr}_{\epsilon}(f)=\Sigma(X)\cup B(X,X).
\]
\end{proposition}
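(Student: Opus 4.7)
The plan is to analyze when the univariate polynomial $f(y_0,\epsilon)$ in $\epsilon$ fails to have the maximal number of distinct roots, and match the failure modes to the two announced pieces, $\Sigma(X)$ and $B(X,X)$.

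First I would invoke Theorem~\ref{Eps_Degree} to recall that $f$ has $\epsilon$-degree $2\cdot \mathrm{EDdegree}(X)$, and for a generic $y_0\in \mathbb{C}^n_y$ its roots are exactly
\[
\pm \sqrt{d(x_1,y_0)},\ \ldots,\ \pm \sqrt{d(x_{\mathrm{EDdegree}(X)},y_0)},
\]
where $x_1,\ldots,x_{\mathrm{EDdegree}(X)}$ are the critical points of $d_{y_0}$ on $X_{reg}$. By the definition of the univariate $\epsilon$-discriminant, $y_0$ lies in $V(\mathrm{Discr}_\epsilon(f))$ if and only if this list of $2\cdot \mathrm{EDdegree}(X)$ numbers contains a repetition.

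Next I would enumerate the ways a repetition can occur. Either (a) the generic count of critical points drops, or several critical points coalesce, which by Remark~\ref{def_Ed_disc} is precisely the condition $y_0\in \Sigma(X)$; or (b) all critical points are distinct, in generic number, but some pair $x_i\neq x_j$ satisfies $d(x_i,y_0)=d(x_j,y_0)$, so that two of the $\pm\sqrt{\cdot}$ entries agree: this places $y_0$ in the proper bisector locus $B_0(X,X)$; or (c) some $d(x_i,y_0)=0$, so that the two signs of $\sqrt{d(x_i,y_0)}$ give the same value. In case (c) one has $x_i=y_0\in X$, and since $X$ is itself a component of $B(X,X)$ (as noted on page~\pageref{Prop_bisector_def}), this again lands $y_0\in B(X,X)$. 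Conversely, I would check that each of the three conditions directly forces a collision of roots, hence produces an element of $V(\mathrm{Discr}_\epsilon(f))$. Combining the two directions gives set-theoretic equality on a dense open locus, and both sides are Zariski closed, so the equality holds globally.

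The main obstacle I anticipate is book-keeping at non-generic $y_0$ where several of the conditions (a), (b), (c) occur simultaneously, or where the distinction between $X\subseteq B(X,X)$ and the proper bisector $B_0(X,X)$ matters. The remedy is to argue only on the open dense locus where the $\epsilon$-degree of $f(y_0,\epsilon)$ is attained and then pass to closures, which is safe because $\mathrm{Discr}_\epsilon(f)$, $\Sigma(X)$, and $B(X,X)$ are defined as (closures of) algebraic loci; the inclusion $X\subseteq B(X,X)$ absorbs the degenerate case $d(x_i,y_0)=0$, so no extra component is left over.
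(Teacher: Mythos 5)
Your approach is essentially the same as the paper's: both invoke Theorem~\ref{Eps_Degree} to identify the roots of $f(y_0,\epsilon)$ with the values $\pm\sqrt{d(x_i,y_0)}$ over the critical points $x_i$, translate ``$y_0 \in V(\mathrm{Discr}_\epsilon(f))$'' into a collision among these roots, and match the collision modes to $\Sigma(X)$ and $B(X,X)$. You are in fact slightly more explicit than the paper: the paper's proof enumerates only two failure modes (a non-generic number of critical points, giving $\Sigma(X)$; or $d(x_i,y_0)=d(x_j,y_0)$ for $x_i\neq x_j$, giving the branch locus) and leaves the case $d(x_i,y_0)=0$ to be absorbed by the earlier remark that $X$ itself is a component of $B(X,X)$, whereas you state this third case separately. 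One small caveat: in case (c) you assert $x_i=y_0$, which over $\mathbb{C}$ does not follow from $d(x_i,y_0)=\sum_k (x_{i,k}-y_{0,k})^2=0$, since the complexified squared distance has nontrivial isotropic vectors; the implication does hold off a lower-dimensional locus, and since you pass to Zariski closures anyway the conclusion $y_0\in B(X,X)$ is unaffected, but the phrasing should be softened (e.g.\ ``$y_0\in\mathcal{O}_0(X)$, whose closure lies in $B(X,X)$'').
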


Throughout the rest of the article we call the union of the ED discriminant and the bisector hypersurface the \textbf{offset discriminant}, denoted $\Delta(X)$. And we recall that by construction, it is \textit{the envelope of all the offset hypersurfaces} to $X$.

\begin{example}[\bf{Offset discriminant of an ellipse}]
Let $X$ be the ellipse defined by $x_1^2+4x_2^2-4$. The offset family of the ellipse is defined by the vanishing of the polynomial from Example~\ref{ellipse_offset_exa}. The $\epsilon$-discriminant of this polynomial factors into five irreducible components. One of them is the defining polynomial of the sextic {\em Lam\'e curve} 
\[
64y_1^6+48y_1^4y_2^2+12y_1^2y_2^4+y_2^6-432y_1^4+756y_1^2y_2^2-27y_2^4+972y_1^2+243y_2^2-729,
\]
with zero locus $\Sigma(X)$, the ED discriminant (evolute) of $X$. The remaining four components comprise the bisector curve $B(X,X)$ of the ellipse. Two out of these four components of $B(X,X)$ are the $x$- and $y$- axes (the proper bisector locus $B_0(X,X)$), one of the components is the ellipse itself (because for $\epsilon=0$ the variety is doubly covered under the projection $\mathrm{pr}_y$) and the remaining component is fully imaginary. A cartoon of the real part of $\Delta(X)$ can be seen in Figure~\ref{ellipse_disc}. The ellipse is black, the proper bisector locus (the axis) is blue and the ED discriminant is red.

\begin{figure}[h]
\begin{center}
\vskip -0.1cm
\includegraphics[scale=0.6]{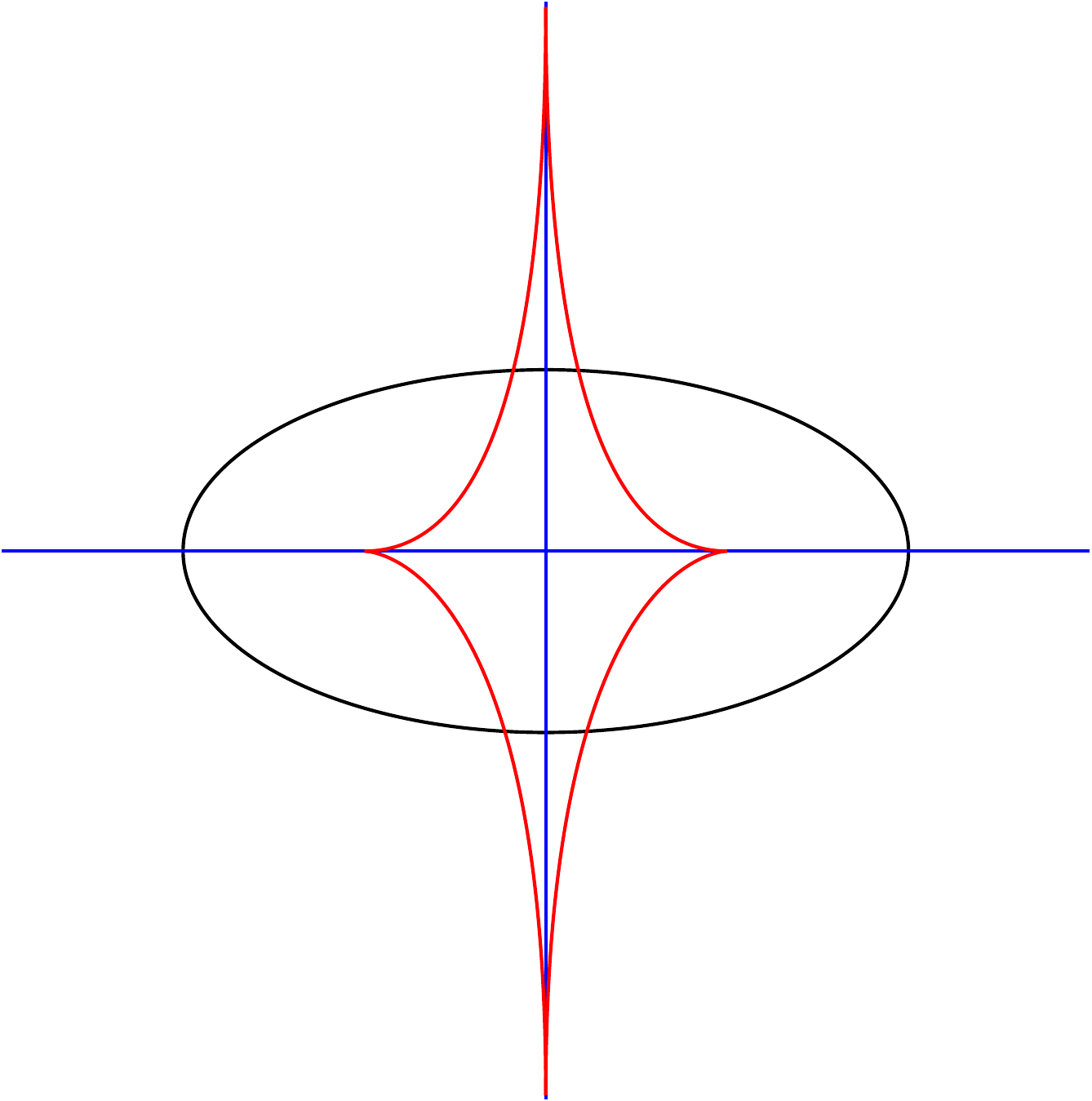}
\vskip -0.4cm
\caption{The ED discriminant and the bisector curve of the ellipse.}
\label{ellipse_disc}
\end{center}
\end{figure}

\end{example}

\begin{corollary}\label{degree_discriminant}
Let $X$ be an irreducible variety in $\mathbb{C}^n$. The degree of its offset discriminant $\Delta(X)$ (hence also the degree of its ED discriminant $\Sigma(X)$ and the degree of the bisector hypersurface $B(X,X)$) is bounded from above by
\[
2\cdot \mathrm{deg}_y(\mathcal{O}(X))\cdot\left(4\cdot\mathrm{EDdegree}(X)-2\right).
\]
\end{corollary}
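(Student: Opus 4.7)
The approach is to apply the preceding proposition, which identifies $\Delta(X)$ with the zero locus $V(\mathrm{Discr}_\epsilon(f))$, where $f(y,\epsilon)$ is the defining polynomial of the offset family $\mathcal{O}(X)$. Thus $\deg(\Delta(X)) \leq \deg_y\mathrm{Discr}_\epsilon(f)$, and it suffices to bound the latter in terms of $\deg_y(\mathcal{O}(X))$ and $\deg_\epsilon(f)$.

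Write $f = \sum_{i=0}^d a_i(y)\epsilon^i$. By Theorem~\ref{Eps_Degree}, $d = \deg_\epsilon(f) = 2\cdot\mathrm{EDdegree}(X)$, and by the definition of $\deg_y(\mathcal{O}(X))$ each coefficient satisfies $\deg_y(a_i) \leq \deg_y(\mathcal{O}(X))$. Using the identity $\mathrm{Discr}_\epsilon(f) = \pm a_d^{-1}\mathrm{Res}_\epsilon(f,f')$, where $\mathrm{Res}_\epsilon(f,f')$ is the determinant of the $(2d-1)\times(2d-1)$ Sylvester matrix whose nonzero entries are scaled coefficients $a_i$, one sees that $\mathrm{Discr}_\epsilon(f)$ is a polynomial in $a_0,\ldots,a_d$ homogeneous of degree $2d-2$. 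Substituting the bound on $\deg_y(a_i)$ yields
\[
\deg_y\mathrm{Discr}_\epsilon(f) \;\leq\; (2d-2)\cdot\deg_y(\mathcal{O}(X)) \;=\; (4\cdot\mathrm{EDdegree}(X)-2)\cdot\deg_y(\mathcal{O}(X)),
\]
which is no larger than the bound in the statement (the extra factor of $2$ provides slack, absorbing e.g.\ the ambiguity between the total degree of a determinant and that obtained by dividing out $a_d$).

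For the parenthetical assertions on $\Sigma(X)$ and $B(X,X)$: by construction $\Delta(X) = \Sigma(X) \cup B(X,X)$, so each of these hypersurfaces is a union of irreducible components of $\Delta(X)$. Hence $\deg\Sigma(X)$ and $\deg B(X,X)$ are \emph{a fortiori} bounded by $\deg(\Delta(X))$, giving the same estimate. There is no real obstacle in this proof; it is a direct combination of the preceding proposition with the elementary Sylvester-matrix degree estimate for a discriminant whose coefficients are polynomials in auxiliary variables.
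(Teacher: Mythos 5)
Your proof is correct and follows essentially the same route as the paper: write $f=\sum_{i=0}^d a_i(y)\epsilon^i$, observe that $\mathrm{Discr}_\epsilon(f)$ is homogeneous of degree $2d-2$ in the coefficients $a_i$, bound $\deg_y(a_i)\le\deg_y(\mathcal{O}(X))$, and substitute $d=2\cdot\mathrm{EDdegree}(X)$ from Theorem~\ref{Eps_Degree}. You also correctly note that this yields the sharper bound $(4\cdot\mathrm{EDdegree}(X)-2)\cdot\deg_y(\mathcal{O}(X))$, which the paper's own argument likewise gives, so the extra factor of $2$ in the statement is simply slack.
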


\begin{proof}
Suppose that the offset family $\mathcal{O}(X)$ is the zero set of the polynomial $f(y,\epsilon)$.
The offset discriminant $\Delta(X)$ is the discriminant of the univariate polynomial \[f(y,\epsilon)=a_0(y)+\ldots+a_{d-1}(y)\cdot \epsilon^{d-1}+a_{d}(y)\epsilon^d\] in the variable $\epsilon$ of degree $d=\mathrm{deg}_{\epsilon}(f)$.  So $\Delta(X)$ is a homogeneous polynomial in the coefficients $a_0(y),\ldots,a_d(y)$ of degree equal to $2\cdot d-2$. By Theorem~\ref{Eps_Degree} we have that $d=2\cdot \mathrm{EDdegree}(X)$. Now because the discriminant is a homogeneous polynomial in the coefficients we get the desired degree bound. 
\end{proof}

\begin{example}[\textbf{Degree bounds of the offset discriminant}]
The following table contains degree bounds of the offset discriminant based on the formula above and the total degree formulae, $\deg_y(\mathcal{O}(X))$, by San Segundo and Sendra~\cite[Appendix. Table of offset degrees]{SegundoSendra09}.

\begin{center}
 \begin{tabular}{|c|c| c| c| c|} 
 \hline
Name of $X$ & Defining poly. of $X$ & $\mathrm{deg}_y\mathcal{O}(X)$ & $\mathrm{deg}_{\epsilon}\mathcal{O}(X)$ & $\mathrm{deg}_y\Delta(X) \leq $ \\ [0.5ex] 
 \hline\hline
 Circle& $x_1^2+x_2^2-1$ & 4 & 4 & 24 \\ 
 \hline
 Parabola& $x_2-x_1^2$ & 6 & 6 & 60 \\
 \hline
Ellipse& $x_1^2+4x_2^2-4$ & 8 & 8 & 112 \\
 \hline
 Cardioid&$(x_1^2+x_2^2+x_1)^2-x_1^2-x_2^2$ & 10 & 8 & 140 \\
 \hline
 Rose($3$ petals)&$(x_1^2+x_2^2)^2+x_1(3x_2^2-x_1^2)$ & 14 & 12 & 308 \\ [1ex] 
 \hline
\end{tabular}
\end{center}
\end{example}

As the degree of an algebraic variety is a proxy for computational complexity, these degree bounds serve a reminder of the challenges of computing offsets, and thus persistence. We also see how the difficulty depends on the nature of the starting variety $X$. 

\section{Algebraicity of persistent homology}\label{Sec3}

As an application of this knowledge of offset hypersurfaces, we study the persistent homology of the offset filtration of an algebraic variety. We define this to be the homology of the set of points within distance $\epsilon$ of the variety, which is bounded by the offset hypersurface. First, we review background material on persistent homology. Next, we define the persistent homology of the offset filtration in terms of the offset hypersurface. We prove the algebraicity of two quantities involved in computing persistent homology. We do not present a new algorithm to compute persistent homology. However, we do provide theoretical foundations to show that it is possible for an algorithm to compute persistent homology barcodes exactly. If the expected output of a computation is algebraic over the rational numbers, this means it can be computed using polynomials of finite degree, and thus it is possible for the algorithm to terminate. We also discuss the relevance of the offset discriminant to persistent homology.

\subsection{Background on Persistent Homology}
We now provide an abbreviated introduction to persistent homology. For further background, we refer the reader to \cite{Carlsson}. 

The persistent homology of a finite subset of $\mathbb{R}^n$ at parameter $\epsilon$ is defined as the homology of a simplicial complex, called the \v{Cech} complex, associated to a covering of the point cloud by hyperballs of radius $\epsilon$. By the nerve theorem, the \v{C}ech complex has the same homology as the covering.  

\begin{definition}
Let $X \subset \mathbb{R}^n$ and $\epsilon>0$ a parameter. Let $\sigma$ be a finite subset of $X$. The \textbf{\v{C}ech complex} of $X$ at radius $\epsilon$ is
\[\displaystyle{C_X(\epsilon)=\left\{ \sigma \subset X \text{ s.t. } \bigcap_{x \in \sigma} B_\epsilon(x) \neq 0\right\},}\]
an abstract simplicial complex where the $n$-faces are the subsets of size $n$ of $X$ with nonempty $n$-wise intersection.
\end{definition}

From these simplicial complexes, we obtain a filtration for which we can define persistent homology. 

Following \cite{EH}, consider a simplicial complex, $K$, and a function $f: K \to \mathbb{R}$. We require that $f$ be \textit{monotonic} by which we mean it is non-decreasing along chains of faces, that is, $f(\sigma) \leq f(\tau)$ whenever $\sigma$ is a face of $\tau$. Monotonicity implies that the sublevel set, $K(a)=f^{-1}(-\infty, a],$ is a subcomplex of $K$ for every $a \in \mathbb{R}$. Letting $m$ be the number of simplices in $K$, we get $n+1 \leq m+1$ different subcomplexes, which we arrange as an increasing sequence, 
\[ \emptyset=K_0 \subseteq K_1 \subseteq \dots \subseteq K_n=K.  \]
In other words, if $a_1<a_2< \dots <a_n$ are the function values of the simplices in $K$ and $a_0= - \infty$ then $K_i=K(a_i)$ for each $i$. We call this sequence of complexes the \textit{filtration}
of $f$. 

For every $i \leq j$ we have an inclusion map from the underlying space of $K_i$ to that of $K_j$ and therefore an induced homomorphism, $f_q^{i,j}: H_q(K_i) \to H_q(K_j),$ for each dimension $q$. 

\begin{definition}
The \textbf{q-th persistent homology groups} are the images of the homomorphisms induced by inclusion, $H_q^{i,j}=\text{im }f_q^{i,j},$ for $0 \leq i \leq j \leq n$. The corresponding \textbf{q-th persistent Betti numbers} are the ranks of these groups, $\beta_q^{i,j}= \text{rank }H_q^{i,j}$. 
\end{definition}

As a consequence of the Structure Theorem for PIDs, the family of modules $H_q(K_i)$ and homomorphisms $f_q^{i,j}: H_q(K_i) \to H_q(K_j)$ over a field $F$ yields a decomposition 

\begin{equation}\label{pmod}
\displaystyle{H_{q} (K_i;F) \cong \bigoplus_i x^{t_i} \times F[x] \bigoplus \left( \bigoplus_j x^{r_j} \cdot \left( F[x] / (x^{s_j} \cdot F[x]) \right)   \right),} \end{equation} 
where $t_i,r_j,$ and $s_j$ are values of the persistence parameter $\epsilon$ \cite{Ghrist}. 

The free portions of Equation \ref{pmod} are in bijective correspondence with those homology generators which appear at parameter $t_i$ and persist for all $\epsilon>t_i$, while the torsional elements correspond to those homology generators which appear at parameter $r_j$ and disappear at parameter $r_j+s_j$.

To encode the information given by this decomposition, we create a graphical representation of the $q$-th persistent homology group called a \textbf{barcode} \cite{Ghrist}. For each parameter interval $[r_j,r_j+s_j]$ corresponding to a homology generator, there is a horizontal line segment (bar), arbitrarily ordered along a vertical axis. The persistent Betti number $\beta_q^{i,j}$ equals the number of intervals in the barcode of $H_q(K_i;F)$ spanning the parameter interval $[i,j]$. 

Persistent homology is defined using the \v{C}ech complex, but it is hard to compute using the \v{C}ech complex because this requires storing simplices is many dimensions. In practice, persistent homology is often computed using the Vietoris-Rips complex, a simplicial complex determined entirely by its edge information. The Vietoris-Rips complex is defined as follows. 

\begin{definition}
Let $X \subset \mathbb{R}^n$ and $\epsilon>0$ a parameter. Let $\sigma$ be a finite subset of $X$.  The \textbf{Vietoris-Rips complex} of $X$ at radius $\epsilon$ is
\[\displaystyle{VR_X(\epsilon)=\left\{ \sigma \subset X \text{ s.t. } B_\epsilon(x) \cap B_\epsilon(y) \neq 0 \text{ for all pairs} (x,y)
 \in \sigma \right\},}\]
an abstract simplicial complex where the $n$-faces are the subsets of size $n$ of $X$ such that every pair of points in the subset has nonempty pairwise intersection.
\end{definition}

Using Jung's theorem, one can show that $C_X(\epsilon) \subseteq VR_X( \sqrt{2} \epsilon) \subset C_X( \sqrt{2} \epsilon)$, so that the Vietoris-Rips complex can indeed be used to approximate persistent homology \cite{Ghrist}. 

We include here an example of the real variety defined by the Trott curve and a barcode representing its persistent homology, computed by taking a sample of points on the variety.

\begin{example}[\textbf{The barcodes of the Trott curve}]
In dimension 1, the first four bars correspond to the cycles in each of the four components of the real variety. As epsilon increases, these cycles fill in, and then the components join together in one large circle. This demonstrates how persistent homology can detect the global arrangement of the components of a variety. The barcodes were computed using Ripser, which uses the Vietoris-Rips complex \cite{Ripser}. 

 \begin{figure}[h!]
   \begin{center}
    \includegraphics[scale=0.7]{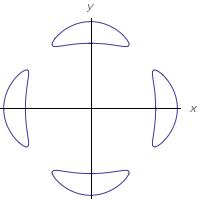}\label{2}
  \end{center}
  \caption{The Trott curve.}
  
      \begin{center}
    \includegraphics[scale=0.4]{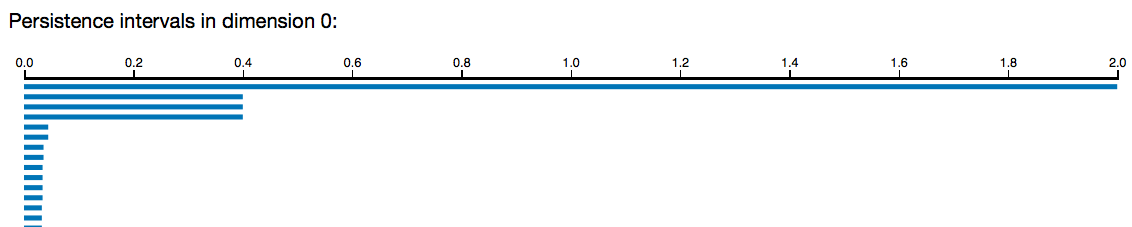}
  \end{center}
  
       \begin{center}
    \includegraphics[scale=0.4]{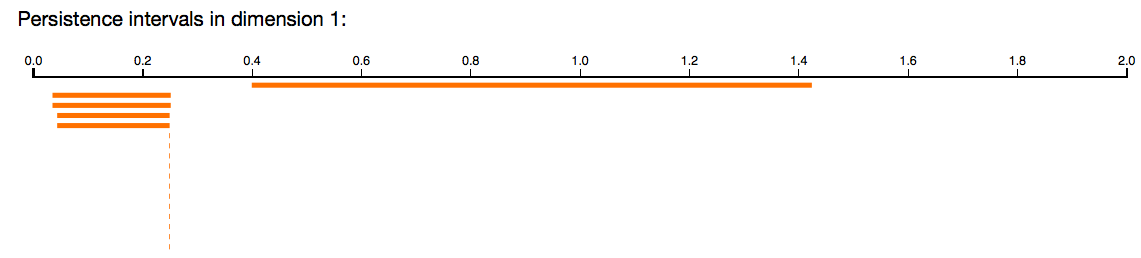}
  \end{center}
  \caption{Barcodes for the Trott curve in homological dimensions 0 and 1.}
  
  \end{figure}

\end{example}

\subsection{Persistent homology of the offset filtration of a variety}

Persistent homology is typically defined for a finite metric space. To compute the persistent homology of a variety $X$, one might sample a finite set of points from the variety and compute the \v{C}ech complex of those points. The equivalent of the \v{C}ech complex $C_X(\epsilon)$  obtained from sampling every point on the variety would be the set of all points within $\epsilon$ of the variety. For this reason, we define the \textbf{persistent homology of the offset filtration of a variety $X$ at parameter $\epsilon$} as the homology of the subset 
\[ X_\epsilon=\{ x | \text{ there exists } y \in V \text{ with } ||x-y|| \leq \epsilon \} \subset \mathbb{R}^n \] consisting of all points within $\epsilon$ of the variety.

Since the $\epsilon$-offset hypersurface is the envelope of a family of $\epsilon$-hyperballs centered on the variety, we can define the the persistent homology of the offset filtration of a variety at parameter $\epsilon$ equivalently as the homology of the set bounded by $\mathcal{O}_{\epsilon}(X)$.

To define barcodes with respect to this filtration, we use Hardt's theorem from real algebraic geometry. We now make the necessary definitions and state the theorem. 

\begin{definition}[Definition $9.3.1$ from~\cite{BCR}]
Let $S, T$ and $T^{'}$ be semi-algebraic sets, $T^{'} \subset T$, and let $f: S \to T$ be a continuous semi-algebraic mapping. A \textbf{semi-algebraic trivialization of} $f$ \textbf{over} $T^{'}$, with fiber $F$, is a semi-algebraic homeomorphism $\theta: T^{'} \times F \to f^{-1}(T^{'})$, such that $f \circ \theta$ is the projection mapping $T^{'} \times F \to T^{'}$. We say that the semi-algebraic trivialization $\theta$ is \textbf{compatible with a subset} $S^{'}$ \textbf{of} $S$ if there is a subset $F^{'}$ of $F$ such that $\theta (T^{'} \times F^{'})= S^{'} \cap f^{-1}(T^{'})$. 
\end{definition}

\begin{lemma}[Hardt's Theorem, $9.3.1$ from~\cite{BCR}]
Let $S$ and $T$ be two semi-algebraic sets, $f: S \to T$ a continuous semi-algebraic mapping, $(S_j)_{j=1, \dots, q}$ a finite family of semi-algebraic subsets of $S$. There exist a finite partition of $T$ into semi-algebraic sets $T= \cup_{l=1}^r T_l$ and, for each $l$, a semi-algebraic trivialization $\theta_l: T_l \times F_l \to f^{-1}(T_l)$ of $f$ over $T_l$, compatible with $S_j$, for $j=1, \dots, q$. 
\end{lemma}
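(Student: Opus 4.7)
The plan is to prove Hardt's theorem by reducing $f$ to a coordinate projection and then invoking cylindrical algebraic decomposition (CAD) adapted to $S$ and all of the $S_j$. First, I would replace the map $f:S\to T$ by its graph: viewing $S\subseteq\mathbb{R}^n$ and $T\subseteq\mathbb{R}^m$, the graph $\Gamma_f\subseteq\mathbb{R}^n\times\mathbb{R}^m$ is semi-algebraic by Tarski--Seidenberg, and $(x,t)\mapsto t$ restricted to $\Gamma_f$ agrees with $f$ after composition with the semi-algebraic homeomorphism $S\to\Gamma_f$, $x\mapsto(x,f(x))$. Replacing $S_j$ by its image in $\Gamma_f$, I may assume from the outset that $f$ is the restriction of the projection $\pi:\mathbb{R}^n\times\mathbb{R}^m\to\mathbb{R}^m$.

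Next, I would apply a cylindrical algebraic decomposition of $\mathbb{R}^n\times\mathbb{R}^m$ compatible with the finite collection of semi-algebraic sets $\{S,S_1,\ldots,S_q\}$ and adapted to the projection $\pi$. This produces a finite partition of $\mathbb{R}^m$ into connected semi-algebraic cells; taking those cells whose union is $T$ yields the desired partition $T=\bigcup_{l=1}^r T_l$. Over each base cell $T_l$, the CAD gives a cylindrical stack of semi-algebraic cells of the form
\[
C=\bigl\{(x,t)\in\mathbb{R}^n\times T_l \;:\; \xi^-_C(t)<x_{k}<\xi^+_C(t),\ (x_1,\ldots,x_{k-1})\in C'\bigr\},
\]
where $C'$ is a cell in the stack below and the section functions $\xi^\pm_C:T_l\to\mathbb{R}$ are continuous semi-algebraic (or identically $\pm\infty$). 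Fixing a basepoint $t_0\in T_l$, I would set $F_l:=f^{-1}(t_0)\cap S$ and build the trivialization $\theta_l:T_l\times F_l\to f^{-1}(T_l)\cap S$ inductively in the cylinder coordinates, using on each cell the affine rescaling that sends the interval $(\xi^-_C(t_0),\xi^+_C(t_0))$ to $(\xi^-_C(t),\xi^+_C(t))$. This map is semi-algebraic by construction and continuous because each $\xi^\pm_C$ is continuous on $T_l$; moreover $\pi\circ\theta_l$ is the projection onto $T_l$ by design.

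Compatibility with the $S_j$ comes for free: since the CAD was chosen adapted to $\{S_1,\ldots,S_q\}$, each $S_j\cap f^{-1}(T_l)$ is a union of cylindrical cells over $T_l$, and the level-by-level rescaling carries $T_l\times(S_j\cap F_l)$ onto $S_j\cap f^{-1}(T_l)$. The main obstacle is the construction and continuity of $\theta_l$ on the cells of positive codimension in the fibre direction (the ``graphs'' $x_k=\xi_C(t)$ squeezed between neighbouring bands), and more subtly the behaviour of the section functions $\xi^\pm_C$ as $t$ approaches $\partial T_l$: one must restrict to open cells $T_l$ on which no section blows up or collides, which is precisely what a fine enough CAD guarantees. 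Once this is arranged, the $\theta_l$ glue into the required trivialisations and the theorem follows.
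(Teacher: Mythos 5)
The paper does not prove this lemma: it is quoted verbatim as Hardt's semialgebraic trivialization theorem, Thm.~9.3.1 in Bochnak--Coste--Roy, and used as an off-the-shelf black box. So there is no ``paper's proof'' to compare your argument against; the only question is whether your sketch would stand on its own.

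Your plan --- reduce to a coordinate projection via the graph, run a CAD of $\mathbb{R}^{n+m}$ compatible with $S,S_1,\dots,S_q$ and adapted to the last $m$ coordinates, then build $\theta_l$ by affine rescaling along the stack --- is a recognizable route to semialgebraic triviality, and the reduction step and the compatibility claim are fine. But the step you yourself flag as ``the main obstacle'' is in fact the entire theorem, and it is not resolved by ``a fine enough CAD.'' Two concrete problems. First, the fibrewise affine rescaling $x_k \mapsto \xi^-_C(t) + \bigl(x_k - \xi^-_C(t_0)\bigr)\dfrac{\xi^+_C(t)-\xi^-_C(t)}{\xi^+_C(t_0)-\xi^-_C(t_0)}$ is not automatically continuous on the closure of a cell in the fibre $f^{-1}(T_l)$: as the lower-level coordinate approaches the boundary of its band, $\xi^+_C-\xi^-_C$ can vanish, and the ratio $\bigl(\xi^+_C(t)-\xi^-_C(t)\bigr)/\bigl(\xi^+_C(t_0)-\xi^-_C(t_0)\bigr)$ need not extend continuously; the examples where it does (e.g.\ a cone over a disc) are special. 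Getting a genuine homeomorphism of the whole fibre, including the thin cells where adjacent bands are glued, requires an argument, not just the cell structure. Second, a CAD adapted to the last $m$ coordinates already forces a rather rigid variable order, and the resulting base cells $T_l$ may fail to be the ones over which the fibres are homeomorphic --- this is why BCR instead proceed by induction on $\dim T$ using a ``good direction'' lemma and a separate trivialization lemma (their 9.3.3), rather than a single global CAD. So: right family of ideas, but the continuity/gluing of $\theta_l$ and the choice of the partition $\{T_l\}$ are the substance of Hardt's theorem, and your sketch leaves both open.
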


Let $S=\{(X_{\epsilon}, \epsilon) |\epsilon \in [0, \infty )\} \subset \mathbb{R}^{n+1}$ and let $\mathrm{pr}_{\epsilon}: S \to \mathbb{R}$ be the projection to $\epsilon$. By Hardt's theorem,  there is a partition of $\mathbb{R}$ into finitely many intervals $I_l=[\delta_l,\epsilon_l]$ for $ l \in \{1, \dots, j\}$ such that the fibers $\mathrm{pr}_{\epsilon}^{-1}(\epsilon)=X_{\epsilon}$ for all $\epsilon \in [\delta_l,\epsilon_l]$ are homeomorphic. Thus we can create the \textbf{offset filtration barcode} of $X$. 

We show that $\{\delta_l \} \cup \{\epsilon_l \}$ for $l \in \{1, \dots, j\}$, 
the values of the persistence parameter $\epsilon$ at which a bar in the offset filtration barcode appears or disappears, are algebraic over the field of definition of a real affine variety $X$. As a consequence, the persistent homology of the offset filtration of $X$ can be computed exactly.  

The proof relies on two lemmas from real algebraic geometry. We describe the content of these lemmas. The setting of the results is a real closed field, which we now define. 

\begin{definition}[Definitions $1.1.9$ and $1.2.1$ from~\cite{BCR}]
A field $R$ is a \textbf{real field} if it can be ordered. A real field $R$ is a \textbf{real closed field} if it has no nontrivial real algebraic extension. 
\end{definition}

The first lemma is Tarski-Seidenberg's Theorem, a fundamental result in real algebraic geometry which implies that quantifier elimination is possible over real closed fields. This means that for every system of polynomial equations and inequalities that can described using logical quantifiers, there is an equivalent system without the quantifiers. 

To state the result, we use the following notation, where $R$ is a real closed field and $a \in R$. 
\begin{align*}
\text{sign}(a)&=0         &  \text{if } &a=0 \\
\text{sign}(a)&=1        &  \text{if }&a > 0 \\
\text{sign}(a)&=-1   &  \text{if }& a<0         
\end{align*}
\begin{lemma}[Tarski-Seidenberg's Theorem, $1.4.2$ from~\cite{BCR}]\label{TST}
Let $f_i(X,Y)=h_{i,m_i}(Y)X^{m_i} + \dots + h_{i,0}(Y)$ for $i=1, \dots, s$ be a sequence of polynomials in $n+1$ variables with coefficients in $\mathbb{Z}$, where $Y=(Y_1, \dots, Y_n)$. Let $\epsilon$ be a function from $\{1, \dots, s\}$ to $\{ -1,0,1 \}$. Then there exists a boolean combination $\mathcal{B}(Y)$ (i.e. a finite composition of disjunctions, conjunctions and negations) of polynomial equations and inequalities in the variables $Y$ with coefficients in $\mathbb{Z}$ such that for every real closed field $R$ and for every $y \in R^n$, the system 
 
 \[
    \left\{
                \begin{array}{c}
                  \text{sign}(f_1(X,y))=\epsilon(1) \\
                   \vdots \\
                  \text{sign}(f_s(X,y))=\epsilon(s) 
                \end{array}
              \right.
  \]
has a solution $x$ in $R$ if and only if $\mathcal{B}(y)$ holds true in $R$. 
\end{lemma}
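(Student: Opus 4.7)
The plan is to establish the stronger principle of \emph{quantifier elimination} for the theory of real closed fields over $\mathbb{Z}$, from which the lemma follows by applying elimination to the existential formula
\[
(\exists X)\,\bigwedge_{i=1}^{s}\bigl(\mathrm{sign}(f_i(X,Y))=\epsilon(i)\bigr).
\]
By standard logical manipulations (negation normal form, splitting disjunctions), it suffices to eliminate a single existential quantifier in $X$ from a conjunction of polynomial sign conditions whose coefficients lie in $\mathbb{Z}[Y]$.

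The two essential tools I would invoke are \textbf{Sturm's theorem}, which counts the real roots of a univariate polynomial on any interval as the difference of sign variations in its Sturm sequence, and \textbf{Thom's lemma}, which states that any real root of a univariate polynomial $p$ is uniquely identified by the sign vector $\bigl(\mathrm{sign}(p'(x)),\ldots,\mathrm{sign}(p^{(\deg p - 1)}(x))\bigr)$. Combining the two, one can enumerate the real roots of the product $\prod_i f_i(X,y)$ via finitely many admissible sign vectors (the \emph{Thom encodings}), and for each encoded root compute the sign of every $f_j(\cdot,y)$ at the root itself and on each adjacent open interval by reading off remainders modulo the appropriate factor. The existence of an $x$ realizing the prescribed sign pattern is then equivalent to the existence, within this finite combinatorial list of cells, of one satisfying every condition: a finite disjunction of conjunctions of sign conditions on polynomials built from the $f_i$.

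The key technical ingredient that makes all of this uniform in $Y$ is the theory of \textbf{subresultant polynomials}. The ordinary Sturm sequence is built by Euclidean division in $R(Y)[X]$, so its remainders have rational (not polynomial) coefficients in $Y$ and their effective degrees jump when certain leading coefficients of $f_i(X,Y)$ specialize to zero. Subresultants replace these remainders with polynomials whose coefficients lie in $\mathbb{Z}[Y]$ and agree with the true Sturm remainders up to signs controlled by the vanishing or non-vanishing of a finite list of principal subresultant coefficients $s_k(Y)\in\mathbb{Z}[Y]$. Branching on all sign patterns of the coefficients $h_{i,j}(Y)$ and of the $s_k(Y)$ produces a finite case analysis; within each branch the effective Sturm sequence is an explicit tuple of polynomials in $\mathbb{Z}[Y][X]$, and the root-counting and sign-determination steps above yield a Boolean combination of sign conditions on polynomials in $\mathbb{Z}[Y]$.

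The main obstacle will be the combinatorial bookkeeping of this case analysis: one must verify that the branching on leading coefficients and on subresultant coefficients is simultaneously \emph{finite}, \emph{exhaustive}, and \emph{pairwise exclusive}, and that within every branch the conclusions of Sturm's theorem and Thom's lemma apply uniformly over every real closed field $R$. The latter uniformity is the reason one prefers this proof over the transcendental one for $R=\mathbb{R}$: all formulas are constructed from integer polynomials, and the relevant algebraic identities (the subresultant specialization property, the Sturm identity, Thom's encoding) are polynomial identities in $\mathbb{Z}[Y][X]$ that transfer to any real closed field. Assembling all branches into a single Boolean combination $\mathcal{B}(Y)$ of polynomial sign conditions in $\mathbb{Z}[Y]$ then completes the proof.
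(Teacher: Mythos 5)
The paper does not actually prove this lemma: it is quoted verbatim as Theorem~$1.4.2$ from Bochnak--Coste--Roy and invoked as a black box. There is therefore no ``paper's own proof'' to compare against; I can only evaluate your sketch on its merits and against the standard argument in \cite{BCR}.

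Your outline is, in substance, that standard algebraic argument: reduce to eliminating one existential variable $X$ from a conjunction of sign conditions whose coefficients lie in $\mathbb{Z}[Y]$, then answer the resulting univariate root-counting and sign-determination questions uniformly over $Y$ by branching on the vanishing pattern of leading coefficients and replacing Euclidean remainders by subresultant polynomials. Modern expositions (Basu--Pollack--Roy) present it exactly as you do with Thom encodings; BCR's original proof phrases it in terms of parametric Sturm-type sequences built by pseudodivision, which is the same mechanism under a different name. So the content is right. A few points deserve flagging, though. First, your opening framing is redundant and slightly backwards: the lemma \emph{as stated} already is the single-variable elimination step from a conjunction of sign conditions, so invoking ``the stronger principle of quantifier elimination'' and then reducing back down to exactly the statement you started from accomplishes nothing; you should head straight for the Sturm/subresultant argument, and note afterwards (if you like) that full quantifier elimination follows by iteration and Boolean manipulation. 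Second, your enumeration of cells should be stated to include the two unbounded intervals and all intervals between consecutive roots of $\prod_i f_i(\cdot,y)$, not only neighborhoods of roots; the signs there are governed by leading coefficients and degree parities and must be folded into the same case analysis. Third, if you invoke Thom's lemma you should be explicit that you mean its elementary proof by induction on degree via Rolle's theorem (valid over any real closed field), since in BCR's ordering Thom's lemma appears in Chapter~$2$, downstream of Tarski--Seidenberg, and quoting it from there would be circular. Finally, the phrase ``reading off remainders modulo the appropriate factor'' does not really describe how one determines the sign of $f_j$ on a cell; the usual device is a Tarski query, i.e.\ counting sign changes in a (sub)resultant sequence of $p$ and $p' f_j$, and that is what ultimately produces polynomial sign conditions in $\mathbb{Z}[Y]$. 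None of these is a fatal gap, but each would need to be filled in to turn the sketch into a proof.
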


The second result gives an isomorphism of homology groups of a variety and its restriction to the closed subfield of real algebraic numbers over $\mathbb{Q}$.

\begin{lemma}[Theorem 4.2 from~\cite{Delfs}]
\label{delfslemma}
Let $R \subset \tilde{R}$ be an inclusion of real closed fields. Let $\tilde{X} \subset \tilde{R}^n$  be a semialgebraic set and $X=\tilde{X} \cap R^n$. Then there are canonical isomorphisms 
\[ H_q(X) \cong H_q(\tilde{X}) ,\] 
\[ H^q(X) \cong H^q(\tilde{X})  . \]
\end{lemma}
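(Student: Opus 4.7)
The plan is to reduce the isomorphism to a combinatorial identity via a semialgebraic triangulation that descends from $\tilde R$ to $R$. Implicit in the statement is that $\tilde X$ is defined over $R$ (i.e.\ by polynomial equations and inequalities with coefficients in $R$), so that $X = \tilde X \cap R^n$ is its set of $R$-points; Delfs' framework makes this precise.

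First I would invoke the semialgebraic triangulation theorem, which holds uniformly over any real closed field: every (locally closed) semialgebraic set admits a finite semialgebraic triangulation. Applied over $\tilde R$, this yields a finite abstract simplicial complex $K$ and a semialgebraic homeomorphism $\tilde\phi\colon |K|_{\tilde R} \to \tilde X$, where $|K|_{\tilde R}$ is the geometric realization of $K$ in some $\tilde R^N$. Next I would descend this triangulation from $\tilde R$ to $R$ via the Tarski--Seidenberg principle (Lemma~\ref{TST}). The data ``there exists a triangulation of $\tilde X$ of combinatorial type $K$'' can be encoded as a first-order existential statement in the language of ordered fields whose parameters are the coefficients of the polynomials defining $\tilde X$. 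Since these coefficients lie in $R$ and the statement holds over $\tilde R$, it also holds over $R$: there exist vertices $v_1,\dots,v_m \in R^N$ and a semialgebraic homeomorphism $\phi\colon |K|_R \to X$ of the same combinatorial type $K$, given by restricting the same $R$-rational formulas to $R$-points.

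Finally, I would conclude combinatorially. The (semialgebraic) homology of a triangulated semialgebraic set over any real closed field agrees with the simplicial homology of any triangulating complex, and this is determined by the combinatorial type of $K$ alone. Composing the natural identifications,
\[
H_q(X) \;\cong\; H_q^{\mathrm{simp}}(K) \;\cong\; H_q(\tilde X),
\]
and the dual argument yields $H^q(X)\cong H^q(\tilde X)$. Canonicity follows because any two semialgebraic triangulations of a set admit a common refinement, inducing the standard subdivision isomorphisms on (co)homology.

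The main obstacle is the second step: carefully articulating the triangulation theorem as a first-order schema to which Lemma~\ref{TST} applies. Since the combinatorial type $K$ ranges over a discrete set, one loops over candidate $K$s; for each, the assertion that there exist vertices and piecewise-linear semialgebraic homeomorphisms realizing a triangulation of type $K$ compatible with the defining inequalities of $\tilde X$ is a purely existential statement over an ordered field, and hence transfers. A secondary subtlety is choosing the correct homology theory on semialgebraic sets over a non-archimedean $R$: singular homology with its usual topology can misbehave, but the Delfs--Knebusch semialgebraic homology is designed so that the agreement with simplicial homology of a triangulation is automatic, which is what makes the whole argument go through cleanly.
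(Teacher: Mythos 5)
Your plan is in the right spirit (triangulate, then transfer along the field extension), but it runs in the opposite direction from the paper's cited argument, and the descent direction introduces a genuine difficulty that your sketch does not resolve. The paper, following Delfs--Knebusch, first triangulates $X$ over the \emph{smaller} field $R$ and then \emph{base-extends}: a semialgebraic homeomorphism $h\colon |K|\to X$ is given by a graph $\Gamma_h\subset R^N\times R^n$ defined by formulas with parameters in $R$; interpreting those same formulas over $\tilde R$ gives a semialgebraic map $h_{\tilde R}\colon |K|_{\tilde R}\to \tilde R^n$, and Tarski--Seidenberg shows $h_{\tilde R}$ is still a homeomorphism onto $\tilde X$, since ``$h$ is a continuous bijection onto $X$ with continuous inverse'' is first-order in the parameters. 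Both $X$ and $\tilde X$ are then triangulated by the same $K$ and the (co)homology isomorphisms fall out. No existential search over triangulations is needed.

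Your descent version has a concrete gap in the key step. You propose to encode ``there exists a triangulation of combinatorial type $K$'' as a first-order statement by quantifying over vertices and \emph{piecewise-linear} semialgebraic homeomorphisms. But a PL homeomorphism $|K|\to\tilde X$ exists only when $\tilde X$ is a polytope, which a general semialgebraic set is not; the semialgebraic triangulation theorem produces nonlinear homeomorphisms. Dropping the PL restriction, a general semialgebraic homeomorphism is not an element of $\tilde R^m$, so ``there exists a triangulating homeomorphism'' is not a formula Lemma~\ref{TST} applies to unless you first fix a complexity bound on the graph of the map and quantify over the finitely many coefficients describing a semialgebraic set of that bounded complexity. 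That can be done using effective bounds for the triangulation theorem, but it is exactly the kind of technical machinery the extension direction avoids. I would recommend reversing the argument: apply the triangulation theorem to $X$ over $R$, base-extend, and verify the extension is a homeomorphism onto $\tilde X$ by Tarski--Seidenberg; canonicity then follows from common refinements as you say.
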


The proof spans the first four sections of \cite{Delfs} and the first-named author Delfs' thesis. Tarski-Seidenberg's theorem is used to establish a base extension functor from the category of semialgebraic maps and spaces over $R$ to the corresponding category for $\tilde{R}$. Using base extension, a triangulation of $\tilde{X} \subset \tilde{R}^n$ can be obtained from a triangulation of $X \subset R^n$. This establishes the desired isomorphism of homology groups. 

\begin{theorem} \label{algebraicity} (Algebraicity of persistent homology barcodes.)
Let $f_1,\dots,f_s$ be polynomials in $\mathbb{Q}[x_1,\dots,x_n]$ with $X_{\mathbb{R}}=V_\mathbb{R}(f_1,\dots,f_s)$. Then the values of the persistence parameter $\epsilon$ at which a bar in the offset filtration barcode appears or disappears are real numbers algebraic over $\mathbb{Q}$.
\end{theorem}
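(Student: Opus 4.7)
The strategy is to apply Hardt's theorem not over $\mathbb{R}$ but over the real closed subfield $\mathbb{R}_{\mathrm{alg}}$ of real algebraic numbers, and then to transfer the resulting partition back to $\mathbb{R}$ via Tarski--Seidenberg. Since the defining polynomials $f_1,\ldots,f_s$ have coefficients in $\mathbb{Q}$, Hardt's theorem applied over $\mathbb{R}_{\mathrm{alg}}$ will produce partition endpoints in $\mathbb{R}_{\mathrm{alg}}$, which is exactly the algebraicity conclusion sought; the role of Lemma~\ref{TST} (and, as a complementary check, Lemma~\ref{delfslemma}) is to guarantee that the same partition is still valid when we enlarge the field to $\mathbb{R}$.

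First I would check that $S = \{(x,\epsilon)\in\mathbb{R}^{n+1} : x\in X_\epsilon\}$ and the projection $\mathrm{pr}_\epsilon : S \to \mathbb{R}$ admit a first-order description with parameters in $\mathbb{Q}$. The relation $x\in X_\epsilon$ reads $\exists\, y\in X_\mathbb{R}$ with $d(x,y)\leq \epsilon^2$, and Lemma~\ref{TST} eliminates the quantifier on $y$ to produce a boolean combination of polynomial (in)equalities in $\mathbb{Q}[x_1,\ldots,x_n,\epsilon]$ defining $S$; alternatively, one may describe the boundary of $S$ directly via the defining polynomial of $\mathcal{O}(X)$, which lies in $\mathbb{Q}[y,\epsilon]$ by Remark~\ref{same_field}. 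Re-interpreting the same formula over $\mathbb{R}_{\mathrm{alg}}$ yields a semialgebraic set $S_{\mathrm{alg}}\subset\mathbb{R}_{\mathrm{alg}}^{n+1}$ with projection $\mathrm{pr}_\epsilon^{\mathrm{alg}} : S_{\mathrm{alg}} \to \mathbb{R}_{\mathrm{alg}}$. Hardt's theorem, valid over every real closed field, produces finitely many values $\delta_1 < \cdots < \delta_k$ in $\mathbb{R}_{\mathrm{alg}}$ partitioning $\mathbb{R}_{\mathrm{alg}}$ into semialgebraic intervals $I_l^{\mathrm{alg}}$ together with semialgebraic trivializations $\theta_l^{\mathrm{alg}} : I_l^{\mathrm{alg}}\times F_l^{\mathrm{alg}} \to (\mathrm{pr}_\epsilon^{\mathrm{alg}})^{-1}(I_l^{\mathrm{alg}})$.

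The key step is then to transfer these trivializations to $\mathbb{R}$. The statement that $\theta_l^{\mathrm{alg}}$ is a semialgebraic bijection with semialgebraic inverse commuting with projection is a first-order sentence in the language of ordered fields with parameters in $\mathbb{Q}(\delta_1,\ldots,\delta_k)\subset\mathbb{R}_{\mathrm{alg}}$, so by Lemma~\ref{TST} it continues to hold when re-interpreted over $\mathbb{R}$. This yields semialgebraic homeomorphisms $\theta_l : I_l\times F_l \to \mathrm{pr}_\epsilon^{-1}(I_l)$ over $\mathbb{R}$, where each $I_l\subset\mathbb{R}$ has the same endpoints as $I_l^{\mathrm{alg}}$. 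Consequently, for $\epsilon_1\leq\epsilon_2$ both lying in a single $I_l$, the inclusion $X_{\epsilon_1}\hookrightarrow X_{\epsilon_2}$ is a homotopy equivalence induced by $\theta_l$, and the persistent homology is constant on $I_l$. Bars can therefore appear or disappear only at the values $\delta_1,\ldots,\delta_k\in\mathbb{R}_{\mathrm{alg}}$. Lemma~\ref{delfslemma} provides a complementary sanity check: it identifies $H_q(X_\epsilon)$ computed in $\mathbb{R}^n$ with $H_q(X_\epsilon\cap\mathbb{R}_{\mathrm{alg}}^n)$ computed in $\mathbb{R}_{\mathrm{alg}}^n$ for $\epsilon\in\mathbb{R}_{\mathrm{alg}}$, confirming that no topological transition is obscured when passing between the two fields.

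The main obstacle is the rigorous Tarski--Seidenberg transfer of Hardt's trivializations: one must exhibit ``a semialgebraic homeomorphism of the stated form exists'' as a genuine first-order sentence with concretely bounded polynomial degrees for $\theta_l$ and its inverse. This is feasible because Hardt's trivializations can be constructed via a cylindrical algebraic decomposition of $S$ whose output polynomials are effectively bounded in terms of the degrees and number of variables of $f_1,\ldots,f_s$, so the transfer statement, although notationally cumbersome, is of bounded quantifier and variable complexity and hence genuinely first-order.
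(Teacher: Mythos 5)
Your proof is correct and shares the paper's overall framework (apply Hardt's theorem over the real closed field $\mathbb{R}_{\mathrm{alg}}$ to obtain algebraic partition endpoints), but the transfer-back-to-$\mathbb{R}$ step is handled by a genuinely different mechanism. The paper invokes Lemma~\ref{delfslemma} (Delfs--Knebusch) as a black box to identify $H_q(X_\epsilon \cap \mathbb{R}_{\mathrm{alg}}^n)$ with $H_q(X_\epsilon)$, so that constancy of homology across each $I_l \cap \mathbb{R}_{\mathrm{alg}}$ extends to constancy across the corresponding real interval. You instead transfer the Hardt trivialization $\theta_l^{\mathrm{alg}}$ itself to $\mathbb{R}$ via the Tarski transfer principle for real closed fields, obtaining trivializations over $\mathbb{R}$ directly. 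This is valid (and is essentially a hand-built special case of the base-extension machinery that underlies the Delfs--Knebusch theorem, as the paper itself notes), and it has the small advantage of handling transcendental $\epsilon \in \mathbb{R}\setminus\mathbb{R}_{\mathrm{alg}}$ transparently, whereas the paper's appeal to Lemma~\ref{delfslemma} only applies directly to $\epsilon \in \mathbb{R}_{\mathrm{alg}}$. However, your closing paragraph about ``concretely bounded polynomial degrees'' is a red herring: you are not trying to transfer an existential statement ``some trivialization exists,'' but the first-order statement ``this particular formula, with these particular parameters in $\mathbb{R}_{\mathrm{alg}}$, defines a semialgebraic homeomorphism commuting with projection.'' No a priori degree bounds or CAD-complexity estimates are needed; model completeness of the theory of real closed fields transfers this sentence as-is. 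One further remark applicable to both your argument and the paper's: homeomorphism of fibers in each $I_l$ shows the homology groups $H_q(X_\epsilon)$ are constant there, but strictly speaking the barcode also records that the \emph{inclusion-induced} maps $H_q(X_{\epsilon_1}) \to H_q(X_{\epsilon_2})$ are isomorphisms; this follows from the Hardt trivialization of the sublevel-set filtration but is being taken as understood in both treatments.
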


\begin{proof}
Let $\mathbb{R}_{alg}$ denote the closed subfield of real algebraic numbers over $\mathbb{Q}$.
Let $S_{alg}= \{(X_{\epsilon} \cap \mathbb{R}_{alg}^n, \epsilon) |\epsilon \in [0, \infty) \}$. Then $S_{alg}$ is a semialgebraic subset of $\mathbb{R}_{alg}^{n+1}$ in the sense of \cite[Definition 2.4.1]{BCR} since $S_{alg}$ is defined by polynomial equalities and inequalities with coefficients in $\mathbb{Q}$. Let $\mathrm{pr}_{\epsilon}: S_{alg} \to \mathbb{R}_{alg}$ be the projection to $\epsilon$. 

Since $\mathbb{R}_{alg}$ is a closed subfield of $\mathbb{R}$, by Hardt's theorem there is a partition of $\mathbb{R}_{alg}$ into finitely many sets $I_l=[\delta_l,\epsilon_l] \cap \mathbb{R}_{alg}$ with $\delta_l,\epsilon_l \in \mathbb{R}_{alg}$ for $l \in \{1, \dots, j  \}$ such that $\mathrm{pr}_{\epsilon}^{-1}(\epsilon)=X_{\epsilon} \cap \mathbb{R}_{alg}^n$ for all $\epsilon \in [\delta_l,\epsilon_l]$ are homeomorphic. 

By Lemma \ref{delfslemma}, there is an isomorphism of homology groups 
\[ H_q(X_{\epsilon} \cap \mathbb{R}_{alg}^n) \cong H_q(X_{\epsilon})\]
for each $q$ and $\epsilon$. So the partition by the  sets $I_l=[a_l,b_l] \cap \mathbb{R}_{alg}$ given by Hardt's theorem corresponds to a partition of $\mathbb{R}$ by intervals $\tilde{I_l}=[\delta_l,\epsilon_l] \subset \mathbb{R}$ such that $X_{\epsilon}$ for all $\epsilon \in [\delta_l,\epsilon_l]$ with $\delta_l,\epsilon_l \in \mathbb{R}_{alg}$ are homeomorphic.   Thus $ \{\delta_l \}_ {l \in \{1, \dots, j\}} \cup \{\epsilon_l \}_ {l \in \{1, \dots, j\}} \subset \mathbb{R}_{alg}$. 
\end{proof}

\subsection{Using the offset discriminant to identify points of interest for persistent homology}
\FloatBarrier

We now discuss the bisector hypersurface (a component of the offset discriminant) in the context of persistent homology of the offset filtration. We first show how the bisector hypersurface can help identify points where homological events occur. Then we discuss the medial axis, a subset  of the proper bisector locus of $X$, which gives information about the density of sampling required to compute the persistent homology accurately. 

Consider a bar in the offset filtration barcode corresponding to the top dimension Betti number. To each such bar, there corresponds a $y \in \Delta(X)$. Informally, this $y$ is the center of the $n$-dimensional hole corresponding to the bar. We illustrate with the example of the circle $x_1^2+x_2^2=r^2 \subset \mathbb{R}^2$ in Figure \ref{Offset hypersurface construction}. The persistent homology of the circle has $\beta_1=1$ for all $\epsilon<r$, and a real component of the offset hypersurface is a smaller circle inside the circle. When $\epsilon=r$, the offset hypersurface is simply the point at the center of the circle, and $\beta_1=0$. 

\begin{figure}[h]
\begin{center}
\vskip -0.2cm
\includegraphics[scale=0.7]{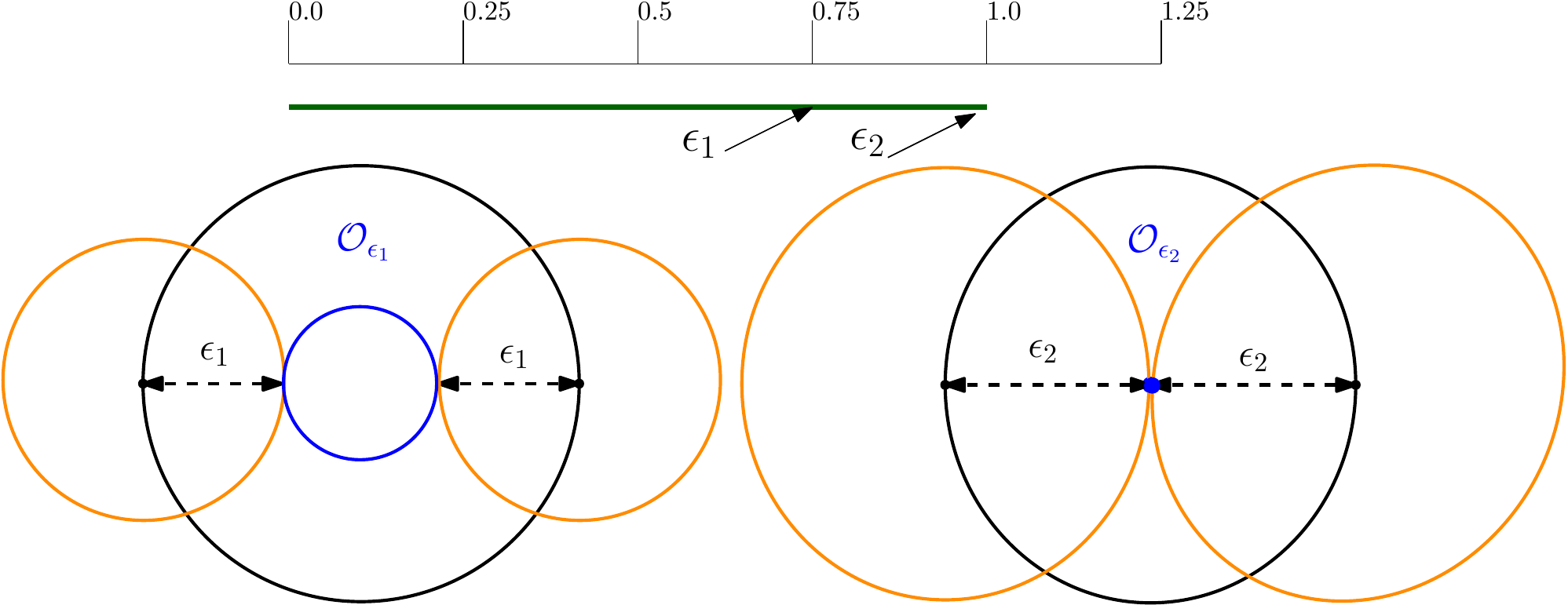}
\vskip -0.2cm
\caption{These pictures illustrate how the offset variety provides a geometric interpretation of the endpoints of a bar. The black circle is the variety $X$ and the orange circles are $\epsilon$-balls around $X$. When $\epsilon$ reaches the radius of the black circle, the blue offset hypersurface $\mathcal{O}_{\epsilon}$ has an isolated real point.}
\label{Offset hypersurface construction}
\end{center}
\end{figure}


\begin{theorem}\label{interesting_points} (Geometric interpretation of endpoints in barcode.) Let $X \subset \mathbb{R}^{n+1}$ be a hypersurface. Let $J= \{ [\delta_l,\epsilon_l] | l \in \{1, \dots,m \}  \}$ be the set of intervals  in the barcode for the top dimensional Betti number $\beta_n$. Then each interval endpoint $\epsilon_l$ corresponds to a point $y_l \in  \mathcal{O}_{\epsilon_l}(X)$ on the bisector hypersurface $B(X,X)$ such that $y_l$ is the limit of a sequence of centers of hyperballs contained in the complement of $\mathcal{O}_{\epsilon}(X)$ as $\epsilon \to \epsilon_l$. 
\end{theorem}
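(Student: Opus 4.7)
The plan is to interpret each bar in the $\beta_n$-barcode as the lifespan of a bounded connected component of $\mathbb{R}^{n+1}\setminus X_\epsilon$, then track a maximally inscribed ball inside that component and show that its center collapses to a point $y_l$ lying simultaneously on $\mathcal{O}_{\epsilon_l}(X)$ and $B(X,X)$. For a hypersurface $X\subset\mathbb{R}^{n+1}$ the top Betti number $\beta_n(X_\epsilon)$ equals the number of bounded components of $\mathbb{R}^{n+1}\setminus X_\epsilon$ by Alexander duality applied to the thickening $X_\epsilon$, whose topological boundary (for generic $\epsilon$) is the real part of $\mathcal{O}_\epsilon(X)$. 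A bar $[\delta_l,\epsilon_l]$ therefore corresponds to a family $\{C_\epsilon\}_{\epsilon\in[\delta_l,\epsilon_l)}$ of bounded components, born at $\delta_l$ and absorbed into $X_\epsilon$ at $\epsilon_l$; the semi-algebraic trivialization from Hardt's theorem (used already in Theorem~\ref{algebraicity}) provides continuous dependence of $C_\epsilon$ on $\epsilon$ inside each open subinterval.

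For each $\epsilon\in[\delta_l,\epsilon_l)$ I would choose $y_\epsilon\in C_\epsilon$ maximizing the Euclidean distance $d(\,\cdot\,,X)$. The open ball of radius $r_\epsilon:=d(y_\epsilon,X)-\epsilon>0$ centered at $y_\epsilon$ lies in $C_\epsilon$, and hence in $\mathbb{R}^{n+1}\setminus\mathcal{O}_\epsilon(X)$. The component $C_\epsilon$ disappears precisely when its maximal inscribed radius collapses, so $r_\epsilon\to 0$, i.e.\ $d(y_\epsilon,X)\to\epsilon_l$. Since the $y_\epsilon$ all lie in a bounded region, we may extract a subsequence $y_{\epsilon_k}\to y_l$; continuity of $d(\,\cdot\,,X)$ then gives $d(y_l,X)=\epsilon_l$, so $y_l\in\mathcal{O}_{\epsilon_l}(X)$, and $y_l$ is by construction the limit of a sequence of centers of hyperballs contained in the complement of $\mathcal{O}_\epsilon(X)$.

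To place $y_l$ on the bisector hypersurface, observe that $y_l$ is forced to be a local maximum of $d(\,\cdot\,,X)$: if $y_l$ admitted a unique nearest point $x_0\in X$, then the distance function would be of class $C^1$ near $y_l$ with nonzero gradient $(y_l-x_0)/\|y_l-x_0\|$, and moving in this direction would produce nearby points of distance strictly greater than $\epsilon_l$ from $X$, contradicting the vanishing of $C_\epsilon$ at $\epsilon_l$. Hence $y_l$ has at least two distinct nearest points on $X$, or, in a degenerate limit, a single nearest point realized with multiplicity greater than one; in either case $(x,y_l)\in\mathcal{OC}_{\epsilon_l}(X)$ for multiple $x$'s (or one doubled $x$), which is exactly the condition defining $B(X,X)$ (and generically $B_0(X,X)$) from Section~\ref{Sec2}.

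The main obstacle will be controlling the topology transition at the closed endpoint $\epsilon_l$: Hardt's trivialization produces a semi-algebraic homeomorphism only on each open stratum, so one needs a direct analysis of the critical level sets of $d(\,\cdot\,,X)$ at value $\epsilon_l$ to rigorously justify the collapse-to-$y_l$ picture. A secondary subtlety is that $C_\epsilon$ may not shrink to a single point but rather to a higher-dimensional stratum of the medial axis; this is handled by observing that every accumulation point of $\{y_\epsilon\}$ satisfies the required conditions, so $y_l$ may be taken to be any such limit.
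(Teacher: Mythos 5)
Your argument is correct in substance and arrives at the conclusion by a genuinely different route than the paper's. The paper's proof constructs a nested sequence of maximal inscribed balls $B_{\delta_n}(z_{\epsilon_n})$ in the shrinking component, observes that $\delta_n\to 0$ so that the centers $z_{\epsilon_n}$ form a Cauchy sequence, and then concludes $y\in B(X,X)$ by noting that each maximal ball touches the offset hypersurface $\mathcal{O}_{\epsilon_n}(X)$ in at least two points $y_{1,\epsilon_n},y_{2,\epsilon_n}$ whose separation tends to zero, so that the two corresponding sheets of $\mathrm{pr}_y\colon\mathcal{OC}_{\epsilon_n}(X)\to\mathcal{O}_{\epsilon_n}(X)$ collide in the limit. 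You instead choose the distance-to-$X$ maximizer $y_\epsilon$ in the dying component, invoke compactness (rather than Cauchy completeness) to extract a convergent subsequence, and land on the bisector via the standard medial-axis characterization: a local maximizer of $d(\cdot,X)$ cannot have a unique nearest point because the distance function would be $C^1$ with nonvanishing gradient there. Your route to the bisector conclusion is actually tighter than the paper's, since ``two offset points getting close'' does not by itself give a branch point, whereas ``two nearest points on $X$'' is precisely the defining condition for $B_0(X,X)$; it is slightly cleaner still to note that each $y_\epsilon$ is already a local maximizer of $d(\cdot,X)$ over the open set $C_\epsilon$, hence already on the medial axis, and then pass to the limit into its closure. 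You also correctly flag the shared weak point: both proofs assert without full justification that the inscribed radius $r_\epsilon$ (resp.\ $\delta_n$) tends to zero at the right endpoint; Hardt trivializations only control open strata, so some separate argument at the critical value $\epsilon_l$ is needed, and the paper's appeal to monotone-decreasing diameters does not close that gap either.
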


We make the following observations. First, the correspondence does not assign each interval to a unique point on the offset discriminant. Consider the persistent homology of the Trott curve. In dimension 1, there is one interval corresponding to four cycles, and we do not specify to which cycle to assign the interval. Second, we note that the set of $y_l$ corresponding to endpoint intervals may not be $0$-dimensional. For example, let $X$ be the torus. Then the set of $y_l$ contains a  circle. 

We also comment on the topology of the real algebraic varieties involved. Suppose $X \subset \mathbb{R}^2$ is a curve. Then $\mathcal{O}_{\epsilon}$ for $\epsilon \in (\epsilon_l-\epsilon^{'}, \epsilon_l)$ will have an oval component not present in $\mathcal{O}_{\epsilon}$ for $\epsilon>\epsilon_l$, so $y_l$ is an isolated real point of $\mathcal{O}_{\epsilon_l}$. 

\begin{proof}
 Fix $[\delta_l,\epsilon_l] \in J$.   Then there exists $\epsilon^{'}>0$ such that $\beta_n(X_{\epsilon_l})<\beta_n(X_{\epsilon})$ for all $\epsilon \in (\epsilon_l-\epsilon^{'}, \epsilon_l)$.

Since $\beta_n(X_{\epsilon_l})<\beta_n(X_{\epsilon})$ for all $\epsilon \in (\epsilon_l-\epsilon^{'}, \epsilon_l)$, there is some $\epsilon_1 \in (\epsilon_l-\epsilon^{'}, \epsilon_l)$ such that there is a maximum $\delta_1>0$ such that there is a ball $B_{\delta_1}^{n+1}(z_{\epsilon_1})$ such that $B_{\delta_1}^{n+1}(z_{\epsilon_1}) \subset X_{\epsilon_l}$ but $B_{\delta}^{n+1}(z_{\epsilon})$ is contained in a bounded connected component of $\mathbb{R}^{n+1} \setminus X_{\epsilon_1}$. 
Furthermore, since $X_{\epsilon_n} \supset  X_{\epsilon_{n-1}}$ for $\epsilon_{n-1} < \epsilon_n$,   there is a monotonically increasing sequence of $\{ \epsilon_n \}_{n=1, 2, \dots}$ with $\epsilon_1 < \epsilon_{n-1} < \epsilon_n < \epsilon_l$ such that for each $\epsilon_n$, there is a maximum $\delta_n$ such that there is a ball $B_{\delta_n}^{n+1}(z_{\epsilon_n}) \subset B_{\delta_{n-1}}^{n+1}(z_{\epsilon_{n-1}})$ with $B_{\delta_n}^{n+1}(z_{\epsilon_n})$ contained in a bounded connected component of $\mathbb{R}^{n+1} \setminus X_{\epsilon}$. 

Let $\epsilon_n \to \epsilon_l$. The diameter of the bounded connected component of  $X_{\epsilon_n}$ containing $z_{\epsilon_n}$ is less than that of $X_{\epsilon_{n-1}}$ containing $z_{\epsilon_{n-1}}$ for $\epsilon_{n-1}<\epsilon_n$ and $B_{\delta_1}^{n+1}(z_{\epsilon_1}) \subset X_{\epsilon_l}$, so $\delta_n \to 0$. So $\{ z_{\epsilon_n}\}$ is a Cauchy sequence in $\mathbb{R}^{n+1}$, so it converges. Let $\displaystyle{y=\lim_{\epsilon_n \to \epsilon_l} z_{\epsilon_n}}$.

Since $\delta_n$ is the maximum radius of such a ball, $B_{\delta_n}^{n+1}(z_{\epsilon_n}) \cap \mathcal{O}_{\epsilon_n}$ contains at least two points $\{y_{1,\epsilon_n}, y_{2, \epsilon_n} \}$. Corresponding to these points in $\mathcal{O}_{\epsilon_n}$ are at least two points in the offset correspondence $\mathcal{O}\mathcal{C}_{\epsilon_n}(X)$, say $\{(x_{1, \epsilon_n}, y_{1, \epsilon_n}), (x_{2, \epsilon_n}, y_{2, \epsilon_n}) \}$. 

As $\delta_n \to 0$, we have $||y_{1, \epsilon_n}- y_{2, \epsilon_n}||\to 0$ since $||y_{1, \epsilon_n}- y_{2, \epsilon_n}||\leq \delta_n$ . Thus $y \in B(X,X)$.
\end{proof}

Since the construction in the proof of the theorem is based on the limit of a converging sequence, the method above does not point to a new algorithm for computing barcodes or determining the reach. However, it shows how persistent homology barcodes can be studied in the algebraic geometry context of the bisector hypersurface.

\FloatBarrier
\subsection{Algebraicity of the reach}

The bisector hypersurface has further relevance to persistent homology because one of its components is the closure of the medial axis. The shortest distance from a manifold to its medial axis is called the reach. The reach of a manifold is a very important quantity in the computation of its persistent homology as it determines the density of sample points required to obtain the correct homology. We now define the reach, describe its importance in the theory of persistent homology, and prove its algebraicity. 

\begin{definition}\label{medial_axis}
Let $X$ be a real algebraic manifold in $\mathbb{R}^n$. The \textbf{medial axis} of $X$ is the set $M_X$ of all points $u \in \mathbb{R}^n$ such that the minimum Euclidean distance from $X$ to $u$ is attained by at least two distinct points in $X$. The \textbf{reach} $\tau(X)$ is the shortest distance between any point in the manifold $X$ and any point in its medial axis $M_X$. 
\end{definition}

Observe that the closure of the medial axis is by our definition the proper bisector locus (recall \ref{Prop_bisector_def}), hence the degree bound of the offset discriminant (see \ref{degree_discriminant}) gives upper bound on the degree of the closure of the medial axis as well.

We now state the theorem showing that sampling density depends on the reach. In particular, the smaller the reach (and thus, the curvier the manifold), the higher the density of sample points required to compute persistent homology accurately. We have adapted this from \cite{NSW}, where it is stated in terms of the reciprocal of a reach, a quantity which they call the condition number of the manifold. 

\begin{theorem}[Theorem 3.1 from~\cite{NSW}]\label{niyogi}
 Let $M$ be a compact submanifold of $\mathbb{R}^N$ of dimension $k$ with reach $\tau$. Let $\bar{x}= \{x_1, \dots, x_n \}$ be a set of $n$ points drawn in i.i.d fashion according to the uniform probability measure on $M$. Let $0< \epsilon < \frac{1}{2\tau}$. Let $\displaystyle{U=\bigcup_{x \in \bar{x}} B_{\epsilon}(x)}$ be a corresponding random open subset of $\mathbb{R}^N$. Let $\beta_1= \frac{vol(M)}{(\cos^k(\theta_1))vol(B_{\epsilon/4}^k)}$ and $\beta_2= \frac{vol(M)}{(\cos^k(\theta_2))vol(B_{\epsilon/8}^k)}$ where $\theta_1=\arcsin(\frac{\epsilon \tau}{8})$ and $\theta_2=\arcsin(\frac{\epsilon \tau}{16})$.  Then for all 
\[ n > \beta_1 \left(\log(\beta_2)+\log\left(\frac{1}{\delta}\right)\right)    \]
the homology of $U$ equals the homology of $M$ with high confidence (probability $> 1-\delta$). 
\end{theorem}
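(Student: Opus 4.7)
The plan is to split the proof into a deterministic topological phase and a probabilistic counting phase. The deterministic phase will establish that, provided the sample $\bar{x}$ is sufficiently dense in $M$, the union $U$ deformation retracts onto $M$, which gives the claimed homology isomorphism $H_q(U) \cong H_q(M)$. The probabilistic phase then estimates how many i.i.d.\ uniform samples are required to guarantee that density with confidence $1-\delta$.

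For the deterministic step, I would exploit the reach directly. Because $\tau = \tau(M) > 0$, the tubular neighborhood $N_\tau(M) = \{ y \in \RR^N : d(y,M) < \tau \}$ carries a well-defined nearest-point projection $\pi : N_\tau(M) \to M$, and the straight-line map $H_t(y) = (1-t) y + t\, \pi(y)$ is a deformation retraction of $N_\tau(M)$ onto $M$. The hypothesis that $\epsilon$ is smaller than a fixed fraction of $\tau$ guarantees $U \subset N_\tau(M)$, so $H$ is defined on $U$; the crux is to show that if $\bar{x}$ is $\epsilon/2$-dense in $M$ then $H_t(U) \subset U$ for all $t \in [0,1]$. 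For $y \in B_\epsilon(x_i)$, density supplies some $x_j$ with $\|x_j - \pi(y)\| \le \epsilon/2$, and reach-based chord-versus-tangent estimates then show that the segment from $y$ to $\pi(y)$ is contained in $B_\epsilon(x_i) \cup B_\epsilon(x_j) \subset U$. Restricting $H$ to $U$ therefore realizes $U \simeq M$.

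For the probabilistic step, I would cover $M$ by Euclidean balls of radius $\epsilon/4$ centered at a maximal $\epsilon/4$-packing on $M$; because such a packing is also an $\epsilon/4$-net, the hypothesis that every ball of the cover contains at least one sample point forces $\bar{x}$ to be $\epsilon/2$-dense. To count covering balls I would use a Riemannian volume comparison: using the reach, geodesic balls in $M$ of radius $r$ have volume at least $\cos^k(\arcsin(r/(2\tau))) \cdot \mathrm{vol}(B_r^k)$, since tangent directions along a short geodesic deviate from a fixed reference $k$-plane by at most $\arcsin(r/(2\tau))$. Dividing $\mathrm{vol}(M)$ by these lower bounds at scales $\epsilon/4$ and $\epsilon/8$ produces precisely the quantities $\beta_1$ and $\beta_2$ in the statement, with $\beta_2$ bounding the packing cardinality and $1/\beta_1$ bounding the uniform probability mass of any single ball in the cover. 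A union bound then yields $\Pr[\bar{x}\text{ not }\epsilon/2\text{-dense}] \le \beta_2 (1 - 1/\beta_1)^n \le \beta_2\, e^{-n/\beta_1}$, and requiring this to be less than $\delta$ gives exactly the sample-size bound $n > \beta_1 (\log \beta_2 + \log(1/\delta))$.

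The main obstacle is the geometric lemma inside the deterministic phase: certifying that the linear homotopy $H_t$ stays inside $U$ requires quantitative control on how a chord of $M$ tilts away from the tangent space at its nearest-point projection, and this is exactly where the reach and the small angles $\theta_1, \theta_2$ enter the estimate. Once that lemma is in hand, the volume-comparison bounds that produce $\beta_1$ and $\beta_2$ follow from standard Riemannian comparison estimates under a reach condition, and the final probabilistic bound reduces to a routine coupon-collector style calculation.
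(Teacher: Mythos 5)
This theorem is stated in the paper purely as a citation of Niyogi--Smale--Weinberger~\cite{NSW} (their Theorem 3.1, lightly re-notated); the paper gives no proof of it, so there is no in-paper argument to compare against. Your two-phase outline does, however, faithfully reconstruct the structure of the original NSW proof: a deterministic step (their Proposition 3.1) showing that an $\epsilon/2$-dense sample on $M$ forces $U$ to deformation retract onto $M$ along the nearest-point projection $\pi$, and a probabilistic step (their Proposition 3.2) bounding the covering number of $M$ at scale $\epsilon/4$ via reach-based volume comparison and then applying a coupon-collector union bound. The quantities $\beta_1$ and $\beta_2$ indeed arise as you describe, as $\mathrm{vol}(M)$ divided by the lower bounds $\cos^k(\theta)\,\mathrm{vol}(B^k_r)$ for geodesic balls at the two relevant radii. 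The place your sketch glosses over real work is the claim that the segment from $y$ to $\pi(y)$ is covered by $B_\epsilon(x_i)\cup B_\epsilon(x_j)$: the actual NSW argument needs a chain of quantitative chord-versus-tangent-space estimates (their Lemmas 4.1 onward) and a choice of radii tighter than ``$\epsilon$ a fixed fraction of $\tau$,'' and this is where the specific constants and angles $\theta_1,\theta_2$ get pinned down. You may also want to note, separately from your proof, that the paper's adaptation appears to have swapped $\tau$ and $1/\tau$ inconsistently: with $\tau$ the reach, the NSW hypotheses should read $\epsilon<\tau/2$ and $\theta_1=\arcsin\!\left(\tfrac{\epsilon}{8\tau}\right)$, $\theta_2=\arcsin\!\left(\tfrac{\epsilon}{16\tau}\right)$, whereas the paper writes $\epsilon<\tfrac{1}{2\tau}$ and $\arcsin\!\left(\tfrac{\epsilon\tau}{8}\right)$, $\arcsin\!\left(\tfrac{\epsilon\tau}{16}\right)$.
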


Studying a formulation for reach given by Federer with the tools of real algebraic geometry, we show the algebraicity of the reach. 

\begin{proposition}\label{reach}(Algebraicity of reach). Let $X$ be a real algebraic manifold in $\mathbb{R}^n$. Let $f_1,\dots,f_s \in \mathbb{Q}[x_1,\dots,x_n]$ with $X_{\mathbb{R}}=V_\mathbb{R}(f_1,\dots,f_s)$. Then the reach of $X$ is an algebraic number over $\mathbb{Q}$. 
\end{proposition}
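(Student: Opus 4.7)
The plan is to realize $\tau(X)$ as the infimum of a semi-algebraic subset of $\mathbb{R}$ defined over $\mathbb{Q}$, and then invoke the classification of such subsets to conclude algebraicity. Concretely, I would start from the definition and consider the set of all candidate values of the reach:
\[
S = \{r \in \mathbb{R}_{\geq 0} : \exists u \in \mathbb{R}^n,\ \exists x_1, x_2 \in X_{\mathbb{R}},\ x_1 \neq x_2,\ \|u-x_1\|^2 = \|u-x_2\|^2 = r^2,\ \forall y \in X_{\mathbb{R}},\ \|u-y\|^2 \geq r^2\}.
\]
By construction, $r \in S$ if and only if there is a point $u \in M_X$ whose (equal and) minimum distance to $X_{\mathbb{R}}$ equals $r$, so $\tau(X) = \inf S$.

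Since $X_{\mathbb{R}}$ is cut out by $f_1, \dots, f_s \in \mathbb{Q}[x_1, \dots, x_n]$, the conditions $x_1, x_2, y \in X_{\mathbb{R}}$ are polynomial equalities with rational coefficients, and the remaining conditions are polynomial (in)equalities in the coordinates of $u, x_1, x_2, y$ and in $r$ with coefficients in $\mathbb{Z}$. Thus $S$ is definable by a first-order formula over $\mathbb{Q}$ with the single free variable $r$.

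Next, I would apply the Tarski-Seidenberg theorem (Lemma~\ref{TST}) to eliminate the quantifiers over $u, x_1, x_2, y$: after quantifier elimination, $S$ is exhibited as a semi-algebraic subset of $\mathbb{R}$ defined over $\mathbb{Q}$. By the standard structure theorem for semi-algebraic subsets of the line, every such set is a finite disjoint union of points and intervals whose endpoints are either $\pm\infty$ or real algebraic numbers over $\mathbb{Q}$. Consequently $\inf S$, provided it is finite, lies in $\mathbb{R}_{\text{alg}}$, and the proof is complete.

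The main (but minor) obstacle is the bookkeeping in translating ``minimum distance attained at two distinct points'' into the precise first-order formula above, together with handling degenerate cases such as $M_X = \emptyset$ (where one may declare $\tau(X) = +\infty$) and verifying that even when $\inf S$ is not attained it still lies in $\mathbb{R}_{\text{alg}}$, which follows because boundary points of a one-dimensional semi-algebraic set defined over $\mathbb{Q}$ are themselves real algebraic numbers. Once the formula is written down and Tarski-Seidenberg is invoked, everything else is immediate from the structure of semi-algebraic subsets of $\mathbb{R}$.
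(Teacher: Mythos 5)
Your proof is correct, but it takes a genuinely different route from the paper's. The paper invokes Federer's intrinsic characterization of the reach,
\[
\tau(X) = \inf_{v \neq u \in X} \frac{\|u-v\|^2}{2\delta}, \qquad \delta = \min_{x \in T_v X}\|(u-v)-x\|,
\]
and then writes down a polynomial system in the variables $x,\delta,\tau,u,v$ over $\mathbb{Q}$, projects via Tarski--Seidenberg, and extracts the reach as the infimum of a one-dimensional semi-algebraic set. You instead work straight from the medial-axis definition (Definition~\ref{medial_axis}), building the set of candidate reach values $S = \{d(u,X) : u \in M_X\}$ as the projection of a first-order formula with blocks $\exists u\,\exists x_1\,\exists x_2\,\forall y$, then apply the same Tarski--Seidenberg/structure-of-semi-algebraic-subsets-of-$\mathbb{R}$ machinery. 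What your approach buys is self-containment and transparency: it requires no appeal to Federer's theorem, and the translation from the definition to the first-order formula is immediate (the $\forall y \in X_{\mathbb{R}}$ block is harmless since $X_{\mathbb{R}}$ is semi-algebraic over $\mathbb{Q}$). What the paper's approach buys is that Federer's formula removes the medial axis entirely, replacing the universal quantifier over points of $X$ by a minimization over tangent spaces, which is conceptually closer to a single algebraic optimization problem. Both proofs hinge on the same two facts — Tarski--Seidenberg over $\mathbb{Q}$ and the algebraicity of endpoints of one-dimensional semi-algebraic sets defined over $\mathbb{Q}$ — so the logical core is the same; your formulation is arguably the cleaner of the two.
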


\begin{proof}
Federer \cite[Theorem 4.18]{Fed} gives a formula for the reach $\tau$ of a manifold $X$ in terms of points and their tangent spaces: 
\begin{equation}
\label{eq:federer}
\qquad \tau(X)\,\,= \inf_{v \neq u \in X} \frac{||u-v||^2}{2\delta}, \quad \text{ where }\,\,
 \delta \, = \min_{x\in \mathrm{T}_vX} \Vert (u-v) - x\Vert. 
\end{equation}
Equation \ref{eq:federer} gives the following system of polynomial inequalities with rational coefficients

 \[
    \left\{
                \begin{array}{ll}
                   x\in \mathrm{T}_vX \\
                  \delta^2= \Vert (u-v) - x\Vert^2 \\
                  \delta>0 \\
                  2\delta\tau= ||u-v||^2 
                \end{array}
              \right.
  \]
in unknowns $\{ x, \delta, \tau, u, v   \}$. 
This defines a semialgebraic set in $\mathbb{R}^{3n+2}$ in the sense of \cite[Definition 2.4.1]{BCR}. Consider the projection on to $\mathbb{R}^2$ with coordinates $(\delta, \tau)$. 
By Tarski-Seidenberg's theorem (\cite[Theorem~1.4.2]{BCR}), the image is a semialgebraic set $S$. 

Project $S$ onto $\mathbb{R}$ with coordinate $\delta$. 
The minimum $\delta_0$ is attained in the closure of the image. 
Then $\overline{S} \cap \{ \delta=\delta_0 \} \subset \mathbb{R}$  is semialgebraic over $\mathbb{Q}$. It is bounded below by $0$. The reach is the infimum of this set, and thus is an algebraic number over $\mathbb{Q}$. 
\end{proof}

\noindent
{\bf Acknowledgements.} We thank Sara Kali\v snik and Kristin Shaw for helpful discussions and Bernd Sturmfels for initiating the project and offering essential insight. We also thank the anonymous reviewers for ideas on improving the article. This material is based upon work supported by the National Science Foundation Graduate Research Fellowship Program under Grant No. DGE 1752814. Any opinions, findings, and conclusions or recommendations expressed in this material are those of the authors and do not necessarily reflect the views of the National Science Foundation. E.H. was supported by the project \textit{Critical points from analysis to algebra} of Sapientia Foundation\--Institute for Scientific Research. Both authors are grateful to the Max Planck Institute for Mathematics in the Sciences in Leipzig for hosting and supporting them while they carried out this project. 
\newpage

\footnotesize {\bf Authors' addresses:}

\smallskip

\noindent Emil Horobe\c{t}, Sapientia Hungarian University of Transylvania \ 
\hfill {\tt horobetemil@ms.sapientia.ro}

\noindent Madeleine Weinstein,
UC Berkeley \hfill {\tt maddie@math.berkeley.edu}

\end{document}